\renewcommand\theequation{\thesection.\arabic{equation}}
\newcommand{\BA}{{\mathbb {A}}}
\newcommand{\BC}{{\mathbb {C}}}
\newcommand{\CA}{{\mathcal {A}}}
\newcommand{\CE}{{\mathcal {E}}}
\newcommand{\CF}{{\mathcal {F}}}
\newcommand{\CN}{{\mathcal {N}}}
\newcommand{\CO}{{\mathcal {O}}}
\newcommand{\CS}{{\mathcal {S}}}
\newcommand{\RR}{{\mathrm {R}}}
\newcommand{\RU}{{\mathrm {U}}}
\newcommand{\cusp}{{\mathrm{cusp}}}
\newcommand{\disc}{{\mathrm{disc}}}
\newcommand{\GL}{{\mathrm{GL}}}
\newcommand{\ns}{{\mathrm{ns}}}
\newcommand{\simp}{{\mathrm{sim}}}
\newcommand{\SO}{{\mathrm{SO}}}
\newcommand{\Sp}{{\mathrm{Sp}}}
\newcommand{\st}{{\mathrm{st}}}
\newcommand{\udl}{\underline}
\newcommand{\wt}{\widetilde}
\newcommand{\ul}{\underline}
\newcommand{\bs}{\backslash}
\def\bks{{\backslash}}
\newtheorem{thm}{Theorem}[section]
\newtheorem{lem}[thm]{Lemma}
\newtheorem{prop}[thm]{Proposition}
\newtheorem {conj}[thm]{Conjecture}
\newtheorem {ques/conj}[thm]{Question/Conjecture}
\newcommand{\Rmnum}[1]{\expandafter\@slowromancap\romannumeral #1@}
\begin{document}
\renewcommand{\theequation}{\arabic{equation}}
\numberwithin{equation}{section}

\title[Fourier Coefficients and Automorphic Forms]{Fourier Coefficients for Automorphic Forms on
Quasisplit Classical Groups}

\author{Dihua Jiang}
\address{School of Mathematics\\
University of Minnesota\\
Minneapolis, MN 55455, USA}
\email{dhjiang@math.umn.edu}

\author{Baiying Liu}
\address{Department of Mathematics\\
University of Utah\\
155 S 1400 E Room 233, Salt Lake City, UT 84112-0090, USA.}
\email{liu@math.utah.edu}

\subjclass[2000]{Primary 11F70, 22E55; Secondary 11F30}

\date{\today}

\keywords{Arthur Parameters, Fourier Coefficients, Unipotent Orbits, Automorphic Forms}

\thanks{The research of the first named author is supported in part by the NSF Grants DMS--1301567, and the research of the second
named author is supported in part by NSF Grants DMS-1302122, and by  postdoc research funding from Department of Mathematics, University of Utah}

\dedicatory{to James Cogdell, on the occasion of his 60th birthday}

\begin{abstract}
In \cite{J14}, a conjecture was proposed on a relation between the global Arthur parameters
and the structure of Fourier coefficients of the automorphic representations in the corresponding global Arthur packets. In this paper,
we discuss the recent progress on this conjecture and certain problems which lead to better understanding of Fourier coefficients of
automorphic forms. At the end, we extend a useful technical lemma to a few versions, which are more convenient for future applications.
\end{abstract}

\maketitle



\section{Introduction}

Fourier coefficients of automorphic forms are among fundamental objects in the theory of automorphic forms and their applications. In classical
theory of automorphic forms, the algebraic part of Fourier coefficients encodes deep arithmetic information of the automorphic forms
(see \cite{Sa90}, for instance). In the theory of automorphic representations of general quasisplit reductive algebraic groups,
Fourier coefficients are defined to be parameterized by
nilpotent orbits, and are regarded as one of the basic invariants attached to the automorphic representations. For instance,
the Whittaker-Fourier coefficients, which are the Fourier coefficients parameterized to the regular nilpotent orbit, have been playing fundamental
roles in the theory, in particular, in the study of automorphic $L$-functions via the Langlands-Shahidi method and the Rankin-Selberg method.
They are also important for setting up the global converse theorem of Cogdell and Piatetski-Shapiro, which is one of the key ingredients
in the proof of the Langlands functorial transfer from classical groups to general linear groups for generic cuspidal automorphic representations
by Cogdell and his collaborators.

It is a theorem of J. Shalika and of I. Piatetski-Shapiro (\cite{S74} and \cite{PS79})
that every irreducible cuspidal automorphic representation of the
general linear group $\GL_n$ is generic, i.e. has a nonzero Whittaker-Fourier coefficient. On the other hand,
it is also known that when a reductive algebraic group $G$ under consideration is not of $\GL$-type, there
are cuspidal automorphic representations of $G$ that have no nonzero Whittaker-Fourier coefficients. Hence it is natural to consider
Fourier coefficients of different type for automorphic forms of classical groups which are not of $\GL$-type. The power of the
more general Fourier coefficients has been shown in the theory of automorphic descents by Ginzburg, Rallis and Soudry in \cite{GRS11},
and in the construction of endoscopy correspondences for classical groups in \cite{J14}.
Those important constructions use only some special types of the general Fourier coefficients of automorphic representations.

In this paper, we discuss a conjecture made in \cite[Section 4]{J14} that relates the notion of general Fourier coefficients to
the global Arthur parameters for automorphic representations occurring in the discrete spectrum of classical groups.
We first recall briefly from \cite{Ar13} and \cite{Mk14} the global Arthur parameters and decomposition of the discrete spectrum
of an $F$-quasisplit classical group $G$ where $F$ is a number field.  Then we summarize the progress
toward the conjecture and relevant problems on understanding of Fourier coefficients of automorphic forms in general, based on our recent
work in a series of papers (\cite{JL13a}, \cite{JL13b}, \cite{JL14}, \cite{JL15}, and \cite{L13}),
including a joint paper with G. Savin (\cite{JLS14}).
By the end, we extend a useful technical lemma to a few versions, which play important roles in our understanding of
Fourier coefficients for automorphic forms in general. The preliminary version of such lemmas played an important role in the
theory of automorphic descents in \cite{GRS11}, in particular, in the proof of non-vanishing of the automorphic descents.
The extended versions that are discussed in this paper have been and will continue to
be important to the understanding of the global non-vanishing of Fourier coefficients and other related problems.
We refer to \cite{JLS14} for discussions of the ideas related to such technical lemmas from more representation-theoretic point of view.

We thank James Cogdell very much for his encouragement and support over the years and hope that this paper is appropriate for this special volume in honor of him. We also thank the referee for many helpful comments and suggestions.

\section{Arthur Parameters and the Discrete Spectrum}

Following \cite[Chapter 1]{Ar13} and \cite[Chapter 2]{Mk14}, we discuss briefly the global Arthur parameters for quasisplit classical groups and
the endoscopy classification of the discrete spectrum. We recall from \cite[Section 2]{J14} for what will be needed for this current paper.

Let $F$ be a number field and $E$ be a quadratic extension of $F$, whose Galois group is denoted by $\Gamma_{E/F}=\{1,c\}$. The $F$-quasisplit
classical groups $G_n$ are of the following types.
They are either the $F$-quasisplit unitary groups
$\RU_{2n}=\RU_{E/F}(2n)$ or $\RU_{2n+1}=\RU_{E/F}(2n+1)$ of hermitian type, the $F$-split special orthogonal group $\SO_{2n+1}$, the symplectic group
$\Sp_{2n}$, or the $F$-quasisplit even special orthogonal group $\SO_{2n}$.
Let $F'$ be a number field, which is $F$ if $G_n$ is not a unitary group, and
is $E$ if $G_n$ is a unitary group. Denote by $\RR_{F'/F}(n):=\RR_{F'/F}(\GL_{n})$ the Weil restriction of the $\GL_{n}$ from $F'$ to $F$.

Following \cite{Ar13} and \cite{Mk14}, take the closed subgroup $G(\BA)^1$ of $G(\BA)$ given by
\begin{equation}
G(\BA)^1:=\{x\in G(\BA)\mid |\chi(x)|_\BA=1, \forall \chi\in X(G)_F\}
\end{equation}
where $X(G)_F$ is the group of all $F$-rational characters of $G$. It follows that $G(F)$ is a discrete subgroup of $G(\BA)^1$ and
its quotient $G(F)\bks G(\BA)^1$ has finite volume with respect to the Tamagawa measure. We consider
the space of all square integrable functions on $G(F)\bks G(\BA)^1$, which is denoted by
$L^2(G(F)\bks G(\BA)^1)$. It has the following embedded, right $G(\BA)^1$-invariant Hilbert subspaces
\begin{equation}
L^2_{\cusp}(G(F)\bks G(\BA)^1)\subset L^2_{\disc}(G(F)\bks G(\BA)^1)\subset L^2(G(F)\bks G(\BA)^1)
\end{equation}
where $L^2_{\disc}(G(F)\bks G(\BA)^1)$ is called the discrete spectrum of $G(\BA)^1$, which decomposes into
a direct sum of irreducible representations of $G(\BA)^1$; and $L^2_{\cusp}(G(F)\bks G(\BA)^1)$ is called the
cuspidal spectrum of $G(\BA)^1$, which consists of all cuspidal automorphic functions on $G(\BA)^1$ and is
embedded into $L^2_{\disc}(G(F)\bks G(\BA)^1)$.

As in \cite{J14}, we denote by $\CA_\cusp(G)$
the set of irreducible unitary representations of $G(\BA)$ whose restriction to $G(\BA)^1$ are irreducible constituents of
$L^2_{\cusp}(G(F)\bks G(\BA)^1)$. Similar definitions apply to $\CA_2(G)$ and $\CA(G)$. It is clear that
\begin{equation}
 \CA_\cusp(G)\subset \CA_2(G)\subset \CA(G).
\end{equation}

Following \cite{Ar13} and \cite{Mk14} (see also \cite{J14}), the set of all $F$-quasisplit simple classical groups,
which are the twisted endoscopic groups of $\RR_{F'/F}(\GL_N)$, is denoted by $\wt{\CE}_\simp(N)$.
Note that when $F'=E$, the endoscopic data (in particular, the endoscopic group $G_\psi$) contains the $L$-embedding $\xi_{\chi_\kappa}$ from the
$L$-group $^LG_\psi$ to the $L$-group of $\RR_{E/F}(\GL_N)$, which depends on the sign $\kappa=\pm$. The two embeddings are not equivalent.

For each $G\in\wt{\CE}_\simp(N)$, $\wt{\Psi}_2(G)$ denotes the global Arthur parameters for the discrete spectrum of $G$.
Theorem 1.5.2 of \cite{Ar13} and Theorem 2.5.2 of \cite{Mk14} give
the following decomposition of the discrete spectrum of $G$.
\begin{equation}\label{arthur}
L^2_\disc(G(F)\bks G(\BA))
\cong
\oplus_{\psi\in\wt{\Psi}_2(G)}\oplus_{\pi\in\wt{\Pi}_\psi(\epsilon_\psi)}m_\psi\pi,
\end{equation}
where $\epsilon_\psi$ is a linear character defined as in Theorem 1.5.2 of \cite{Ar13} in terms of $\psi$ canonically, and
$\wt{\Pi}_\psi(\epsilon_\psi)$ is the subset of the global Arthur packet $\wt{\Pi}_\psi$ consisting of
all elements $\pi=\otimes_v\pi_v$ whose characters are equal to $\epsilon_\psi$, and finally, $m_\psi$ is the multiplicity
depending only on the global Arthur parameter $\psi$. See Theorem 1.5.2 of \cite{Ar13}
and Theorem 2.5.2 of \cite{Mk14} for details.

It should be mentioned that in the case of unitary groups, the set $\wt{\Psi}_2(G)$ depends on
a given embedding $\xi_{\chi_\kappa}$ of $L$-groups, as described in \cite[Section 2.1]{Mk14}. Hence one should use the notation
$\wt{\Psi}_2(G,\xi_{\chi_\kappa})$ for $\wt{\Psi}_2(G)$. If $\xi_{\chi_\kappa}$ is fixed in a discussion, it may be dropped from the
notation if there is no confusion. As explained in \cite{J14}, for simplicity, the set $\wt{\Pi}_\psi(\epsilon_\psi)$ is called the
{\sl automorphic $L^2$-packet} attached to the global Arthur parameter $\psi$. In the following section, we describe a conjecture,
which was first given in \cite{J14}, on the understanding of the structure of Fourier coefficients of members in
the automorphic $L^2$-packet $\wt{\Pi}_\psi(\epsilon_\psi)$ in terms of the global Arthur parameter $\psi$.

\section{A Conjecture on Fourier Coefficients}

In order to state the conjecture in \cite[Section 4]{J14} on the structure of Fourier coefficients for members in
the automorphic $L^2$-packet $\wt{\Pi}_\psi(\epsilon_\psi)$, we refer to \cite[Section 4]{J14} for the details of the
notation and basic facts needed here.

Let $G\in\wt{\CE}_\simp(N)$ be an $F$-quasisplit classical group and $\frak{g}$ be the Lie algebra of the algebraic group $G$.
Let $\CN(\frak{g})$ be the nilcone of $\frak{g}$, which is an algebraic variety defined over $F$ and stable under the
adjoint action of $G$. We fix an algebraic closure $\overline{F}$ of $F$.
The $\overline{F}$-points $\CN(\frak{g})(\overline{F})$ decomposes into
finitely many adjoint $G(\overline{F})$-orbits $\CO$. It is a well-known theorem (Chapter 5 of \cite{CM93}) that
the set of those finitely many nilpotent orbits is in one-to-one correspondence with the set of partitions of $N_G$
with certain parity constraints (called $G$-partitions of $N_G$ or partitions of type $(N_G,G)$).
Here $N_G=N+1$ if $G$ is of type $\SO_{2n+1}$,
$N_G=N-1$ if $G$ is of type $\Sp_{2n}$, and $N_G=N$ if $G$ is of type $\SO_{2n}$ or a unitary group $\RU_N=\RU(N)$.

Over $F$, the $F$-rational points $\CN(\frak{g})(F)$ decomposes into $F$-stable adjoint $G(F)$-orbits $\CO^\st$, which are parameterized by
the corresponding $G$-partitions of $N_G$. Let $\udl{p}$ be a partition of type $(N_G,G)$, and denote the corresponding $F$-stable orbit
by $\CO^\st_{\udl{p}}$. An explicit description of the $F$-rational orbits within a given $F$-stable orbit $\CO^\st_{\udl{p}}$ is
given in \cite[Section I.6]{W01}.

Following \cite[Section 4]{J14}, for $X\in\CN(\frak{g})(F)$, via the theory of the
$\frak{sl}_2$-triple (over $F$), one defines the unipotent $F$-subgroup of $G$, which is denoted by $V_X$, and a
nontrivial character $\psi_X$ on $V_X(\BA)$, which is trivial on $V_X(F)$, from a nontrivial character $\psi_F$ of
$F\bks\BA$. Let $\pi$ belong to $\CA(G)$. The {\bf Fourier coefficient} of $\varphi_\pi$ in the space of $\pi$ is defined by
$$
\CF^{\psi_X}(\varphi_\pi)(g):=\int_{V_X(F)\bks V_X(\BA)}\varphi_\pi(vg)\psi_X^{-1}(v)dv.
$$
Since $\varphi_\pi$ is automorphic, the nonvanishing of $\CF^{\psi_X}(\varphi_\pi)$ depends only on the $G(F)$-adjoint orbit $\CO_X$ of
$X$. Hence we may denote the Fourier coefficient of $\varphi_\pi$ by $\CF^{\psi_{\udl{p},\CO}}(\varphi_\pi)$, with
$X\in\CO$ and $\CO\subset\CO^\st_{\udl{p}}$.

We say that a $\pi\in\CA(G)$ has a nonzero $\psi_{\udl{p},\CO}$-Fourier coefficient if there exists an
automorphic form $\varphi$ in the space of $\pi$ with a nonzero $\psi_{\udl{p},\CO}$-Fourier coefficient
$\CF^{\psi_{\udl{p},\CO}}(\varphi_\pi)$ for some $F$-rational orbit $\CO$ in the $F$-stable orbit $\CO^\st_{\udl{p}}$.
For any $\pi\in\CA(G)$, as in \cite{J14}, we define $\frak{p}^m(\pi)$ (which corresponds to $\mathfrak{n}^m(\pi)$ in the notation of \cite{J14})
to be the set of all partitions $\underline{p}$ of type $(N_G,G)$ with the following properties:
\begin{enumerate}
\item $\pi$ has a nonzero $\psi_{\udl{p}, \CO}$-Fourier coefficient
for some $F$-rational orbit $\CO$ in the $F$-stable orbit $\CO^\st_{\udl{p}}$, but
\item for any ${\udl{p}'} > \udl{p}$ (with the natural ordering of partitions), $\pi$ has no nonzero Fourier coefficients
attached to ${\udl{p}'}$.
\end{enumerate}
It is clear that the local analogy of the set $\frak{p}^m(\pi)$ can be defined for irreducible admissible representations of reductive
algebraic groups over $p$-adic local fields. We refer the local theory to \cite{Ka87} and \cite{MW87} for details.
In \cite[Section4]{J14}, one may also find discussions and references for the relevant local theory for complex and real cases.

\begin{conj}[\cite{Ka87}, \cite{MW87} and \cite{GRS03}]\label{singleton}
With notation as above, the $\frak{p}^m(\pi)$ contains only one partition for local case and automorphic case.
\end{conj}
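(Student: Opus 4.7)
My plan is to split Conjecture~\ref{singleton} into its local and global statements, since the techniques involved are quite different in each setting. For the local $p$-adic case, I would proceed through the Harish-Chandra--Howe character expansion and the Moeglin--Waldspurger theorem relating character coefficients to degenerate Whittaker models; for the global case, I would use the endoscopic classification~(\ref{arthur}) together with the root-exchange lemmas extended at the end of the paper.

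For the local $p$-adic case, let $\pi$ be an irreducible admissible representation of $G(F_v)$. Near the identity, its character admits the Harish-Chandra--Howe asymptotic expansion
$$
\theta_\pi(\exp X)=\sum_{\CO}c_{\CO}(\pi)\,\wh{\mu}_{\CO}(X),
$$
summed over nilpotent $G(F_v)$-orbits. By Moeglin--Waldspurger, nonvanishing of the generalized Whittaker integral attached to $\CO$ is equivalent to nonvanishing of $c_{\CO}(\pi)$, so $\Fp^m(\pi)$ coincides with the set of maximal partitions occurring in the wavefront set $\mathrm{WF}(\pi)$. The conjecture at the local level thus reduces to the statement that $\mathrm{WF}(\pi)$ has a unique maximal element. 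I would attempt this by combining Jacquet-module / Bernstein-center reductions to irreducible constituents of standard modules, so that the problem is transported to representations whose wavefront set can be computed explicitly from Langlands data; the known cases (unipotent representations, Iwahori-spherical $\pi$) then serve as the base of the induction.

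For the global case, given $\pi \in \CA_\disc(G)$ with global Arthur parameter $\psi \in \wt{\Psi}_2(G)$, the parameter carries an intrinsic partition $\udl{p}(\psi)$ built from its $\SL_2$-components. As in~\cite{J14}, I expect the unique element of $\Fp^m(\pi)$ to be a specific partition $\udl{p}_\psi$ determined canonically from $\udl{p}(\psi)$, namely its Barbasch--Vogan-type dual transferred to $G$. The plan is: first, establish the upper bound $\Fp^m(\pi) \le \udl{p}_\psi$ by showing that $\CF^{\psi_{\udl{p}',\CO}}(\varphi_\pi)$ vanishes for every $\udl{p}'>\udl{p}_\psi$ and every $F$-rational $\CO\subset\CO^\st_{\udl{p}'}$, via a constant-term/Eisenstein argument against the cuspidal support dictated by $\psi$; second, establish the existence of a nonzero Fourier coefficient at $\udl{p}_\psi$ by an explicit construction inside the packet $\wt\Pi_\psi(\epsilon_\psi)$, using the extended technical lemmas to exchange roots and reduce to a known non-vanishing result (as in the automorphic descent theory of \cite{GRS11}); third, show that two incomparable partitions cannot both be maximal.

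The main obstacle I expect is the third step of the global case: ruling out two incomparable maximal partitions. Locally, a representation can have reducible wavefront set, so the local uniqueness from the first part does not automatically globalize. Instead one must exploit the rigidity of $\wt\Pi_\psi(\epsilon_\psi)$: any nonzero Fourier coefficient attached to an incomparable $\udl{p}'$ would produce an automorphic realization whose local wavefront data at almost all places must be compatible with $\psi$, and this compatibility is precisely what the $\psi$-structure forbids. Making the compatibility argument precise is where the extended technical lemmas at the end of the paper become essential, since they are exactly what is needed to swap root subgroups with mismatched characters and thereby identify, or compare, two ostensibly different Fourier integrals on the same automorphic form.
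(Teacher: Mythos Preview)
The statement you are attempting to prove is stated in the paper as a \emph{conjecture}, not a theorem; the paper offers no proof and explicitly calls it ``a hard problem'' (see the discussion in Section~4.2, where Conjecture~\ref{singleton} is \emph{assumed} in order to deduce Part~(2) of Conjecture~\ref{cubmfc} from Parts~(1) and~(3)). There is therefore no paper proof to compare your proposal against, and your proposal should be read as a strategy toward an open problem rather than as a proof.

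On the substance of your plan: in the local $p$-adic case, your reduction via the Harish-Chandra--Howe expansion and \cite{MW87} to uniqueness of the maximal orbit in the wavefront set is correct as a reformulation, but that uniqueness is precisely the content of the conjecture and is not known in general. Your proposed induction through Jacquet modules and standard-module constituents does not close the gap: passing to subquotients of standard modules can enlarge the wavefront set in uncontrolled ways, and the ``known cases'' you cite (unipotent, Iwahori-spherical) do not form an inductive base that reaches arbitrary irreducible admissible $\pi$.

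In the global case, your three-step outline is essentially the content of Conjecture~\ref{cubmfc} itself (upper bound, existence at $\eta_{\Fg^\vee,\Fg}(\udl{p}(\psi))$), together with an extra step ruling out incomparable maxima. The paper reports partial progress on the first two steps (Sections~4.1--4.3) but does not claim them in full; and you yourself identify the third step as the main obstacle. Your suggested compatibility argument---that an incomparable maximal $\udl{p}'$ would force local wavefront data incompatible with $\psi_v$ at almost all $v$---would require exactly the local uniqueness you have not established, so the global argument is circular as written. The root-exchange lemmas of Section~6 let one compare Fourier coefficients attached to \emph{related} unipotent data, but they do not by themselves rule out nonvanishing along an incomparable partition.
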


As explained in \cite[Section 4]{J14}, for each global Arthur parameter $\psi\in\wt{\Phi}_2(G)$  that is of the form
\begin{equation}\label{ap}
\psi=(\tau_1,b_1)\boxplus(\tau_2,b_2)\boxplus\cdots\boxplus(\tau_r,b_r)
\end{equation}
where $\tau_i\in\CA_\cusp(\GL_{a_i})$,
one defines $\underline{p}(\psi)=[(b_1)^{(a_1)}\cdots(b_r)^{(a_r)}]$ to be the partition of type $(N_G,G^\vee(\BC))$ attached to the
global Arthur parameter $\psi$.
For $\pi\in\wt{\Pi}_\psi(\epsilon_\psi)$, the structure of the global Arthur parameter $\psi$ is expected to impose constraints on
the structure of $\frak{p}^m(\pi)$, as described in the following conjecture.
We denote by $\eta_{{\frak{g}^\vee,\frak{g}}}$ the Barbasch-Vogan duality map
$($see \cite[Definition A1]{BV85} and \cite[Section 3.5]{Ac03}$)$ from the partitions for $G^\vee$ to
the partitions for $G$.

\begin{conj}[Conjecture 4.2, \cite{J14}]\label{cubmfc}
Let $G\in\wt{\CE}_\simp(N)$ be an $F$-quasisplit classical group. For any global Arthur parameter $\psi\in\wt{\Psi}_2(G)$,
let $\wt{\Pi}_{\psi}(\epsilon_\psi)$ be the automorphic $L^2$-packet attached to $\psi$. Then the following hold:
\begin{enumerate}
\item[(1)] Any partition $\ul{p}$ of type $(N_G,G^\vee(\BC))$, if $\ul{p}>\eta_{{\frak{g}^\vee,\frak{g}}}(\underline{p}(\psi))$, does
not belong to $\frak{p}^m(\pi)$ for any $\pi\in\wt{\Pi}_{\psi}(\epsilon_\psi)$.
\item[(2)] For every $\pi\in\wt{\Pi}_{\psi}(\epsilon_\psi)$, any partition $\ul{p}\in\frak{p}^m(\pi)$ has the property that
$\ul{p}\leq \eta_{{\frak{g}^\vee,\frak{g}}}(\ul{p}(\psi))$.
\item[(3)] There exists at least one member $\pi\in\wt{\Pi}_{\psi}(\epsilon_\psi)$ having the property that
$\eta_{{\frak{g}^\vee,\frak{g}}}(\ul{p}(\psi))\in \frak{p}^m(\pi)$.
\end{enumerate}
\end{conj}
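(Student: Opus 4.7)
The plan is to attack the three parts of Conjecture \ref{cubmfc} by combining the endoscopy classification of Arthur and Mok, the automorphic descent method of Ginzburg-Rallis-Soudry, and the extended root-exchange (Fourier-Jacobi) lemmas advertised at the end of the introduction. Parts (1) and (2) (the upper bounds) should follow from the residual realization of $\wt{\Pi}_\psi(\epsilon_\psi)$ forced by the decomposition (\ref{arthur}), while part (3) (sharpness) should follow from an explicit descent construction.

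For the upper bounds I would first exploit the fact that every $\pi\in\wt{\Pi}_\psi(\epsilon_\psi)$ is, globally, a constituent of a residual representation whose inducing data is assembled from the Speh modules $\Delta(\tau_i,b_i)$ on $\RR_{F'/F}(\GL_{a_ib_i})$. On $\GL_N$, the Moeglin-Waldspurger classification of the discrete spectrum identifies the wavefront partition of the isobaric sum $\tau_\psi=\boxplus(\tau_i,b_i)$ with $\ul{p}(\psi)$, and the Barbasch-Vogan duality $\eta_{{\frak{g}^\vee,\frak{g}}}$ translates this GL bound into the bound $\eta_{{\frak{g}^\vee,\frak{g}}}(\ul{p}(\psi))$ on the classical-group side. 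Concretely, given $\CF^{\psi_{\ul{p},\CO}}(\varphi_\pi)$ with $\ul{p}$ strictly larger than $\eta_{{\frak{g}^\vee,\frak{g}}}(\ul{p}(\psi))$, I would push the $V_X$-integration through the Levi $M_\psi$ by a sequence of root-exchanges produced by the extended technical lemmas; the result is a Fourier coefficient of $\tau_\psi$ against a unipotent strictly larger than $\ul{p}(\psi)$, which vanishes by the $\GL$-case of Conjecture \ref{singleton} (a theorem of Moeglin-Waldspurger). This proves (1); promoting the statement to (2), which excludes partitions merely incomparable to $\eta_{{\frak{g}^\vee,\frak{g}}}(\ul{p}(\psi))$, requires the sharper analysis developed in \cite{JL13a,JL13b,JL14,JL15} and the local wavefront / Arthur-packet compatibility at each place.

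For part (3) I would produce the required member of $\wt{\Pi}_\psi(\epsilon_\psi)$ by iterating the Ginzburg-Rallis-Soudry automorphic descent applied to $\tau_\psi$. By construction, the descent from $\RR_{F'/F}(\GL_N)$ to $G$ is packaged as an integral against a Fourier-Jacobi coefficient whose unipotent data is precisely $\eta_{{\frak{g}^\vee,\frak{g}}}(\ul{p}(\psi))$, so that the extended technical lemmas unfold the corresponding Fourier coefficient into a non-degenerate period of $\tau_\psi$, which is nonzero; this yields a $\pi$ with $\eta_{{\frak{g}^\vee,\frak{g}}}(\ul{p}(\psi))\in\frak{p}^m(\pi)$. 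Two verifications remain: the descent must be shown to belong to $\wt{\Pi}_\psi(\epsilon_\psi)$ with the right character $\epsilon_\psi$ (done via local-global parameter matching at unramified places together with the endoscopy classification), and its maximality in $\frak{p}^m(\pi)$ is read off from part (1).

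The true obstacle is part (2). Root-exchange only controls partitions that dominate $\eta_{{\frak{g}^\vee,\frak{g}}}(\ul{p}(\psi))$ in the partial order, so incomparable partitions cannot be excluded by this argument alone. Handling them demands either a direct proof of Conjecture \ref{singleton} for the $\pi$ at hand (forcing $\frak{p}^m(\pi)$ to be a singleton, hence contained in the down-set of $\eta_{{\frak{g}^\vee,\frak{g}}}(\ul{p}(\psi))$) or a case-by-case argument exploiting the special combinatorics of $G$-partitions and the explicit form of $\eta_{{\frak{g}^\vee,\frak{g}}}$. This is precisely where the current status of the conjecture is only partial, and where the extended root-exchange lemmas proved in the final section of the paper are designed to make the decisive technical contribution.
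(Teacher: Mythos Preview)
The statement is a \emph{conjecture}, and the paper does not claim to prove it in general; it proves only the generic case (Theorem~\ref{gap}) and reports partial progress for $\Sp_{2n}$ in Section~4. Your proposal reads as a full proof sketch, and several of its steps do not go through.

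For Part~(1), your reduction rests on the assertion that every $\pi\in\wt{\Pi}_\psi(\epsilon_\psi)$ is globally a constituent of a residual representation induced from Speh modules. This is false: for many non-generic $\psi$ the packet contains cuspidal members (see Section~4.4), and even when a residual member exists it need not exhaust the packet. So there is no Levi $M_\psi$ through which to ``push'' the $V_X$-integration for an arbitrary $\pi$. The paper's actual argument for Part~(1) (for $\Sp_{2n}$, via \cite{JL13b}) is entirely different: it is a local--global argument using only that all $\pi$ in the packet are nearly equivalent, so share the same unramified component $\pi_v$ at almost all $v$; one then shows by local descent that the relevant twisted Jacquet module of $\pi_v$ vanishes for any $\ul{p}>\eta_{\frak{g}^\vee,\frak{g}}(\ul{p}(\psi))$. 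No global residual realization is needed or available.

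For Part~(3), your plan to apply the Ginzburg--Rallis--Soudry descent to $\tau_\psi$ works exactly when $\psi$ is generic (and this is precisely the proof of Theorem~\ref{gap}). For non-generic $\psi$, $\tau_\psi$ is an isobaric sum of Speh representations, not of cuspidals, and the descent of \cite{GRS11} does not produce a member of $\wt{\Pi}_\psi(\epsilon_\psi)$ with the required Fourier coefficient; nor is it clear that the unipotent data of the descent coincides with $\eta_{\frak{g}^\vee,\frak{g}}(\ul{p}(\psi))$ in that generality. The paper instead proceeds case-by-case: when the packet contains a residual representation $\CE_{\Delta(\tau,b)\otimes\sigma}$ (as classified in \cite{JLZ13}), one computes its Fourier coefficients directly (Theorems~\ref{main1}, \ref{main12}, \ref{main3}), using Theorem~\ref{special} as a key reduction; when the packet is purely cuspidal, Part~(3) is left open and is expected to require the endoscopy-correspondence constructions of \cite{J14}.

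Your diagnosis of Part~(2) as the hardest part, and your observation that it would follow from Parts~(1), (3) together with Conjecture~\ref{singleton}, matches the paper's own discussion in Section~4.2.
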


It is not hard to see that Part (2) is stronger than Part (1) in Conjecture \ref{cubmfc}. However, one expects to prove Part (2) from Parts
(1) and (3) with help of Conjecture \ref{singleton}. More discussions will be found in Section 4.

It is clear that Conjecture \ref{cubmfc} is essentially about the partitions to which a nonzero Fourier coefficient is attached, for
$\pi\in\wt{\Pi}_{\psi}(\epsilon_\psi)$, and yields no information about the rational structure of the $F$-orbits within the
$F$-stable orbit $\CO^\st_{\udl{p}}$. At the partition level,
it remains interesting and important to figure out exactly what $\frak{p}^m(\pi)$ is for each individual $\pi\in\wt{\Pi}_{\psi}(\epsilon_\psi)$.
In general, Conjecture \ref{singleton} asserts that $\frak{p}^m(\pi)$ only contains one partition! Furthermore, the determination of
the rational structure of the $F$-nilpotent orbits with the $F$-stable orbits $\CO^\st_{\udl{p}}$ with $\udl{p}\in\frak{p}^m(\pi)$
will be a more interesting and harder problem. We refer to Section 5 for more discussions on these problems.

When the global Arthur parameter $\psi\in\wt{\Psi}_2(G)$ is generic, it is not hard to show that Conjecture \ref{cubmfc} holds.
As suggested by the referee, we state this as a theorem.

\begin{thm}\label{gap}
Let $G$ be an $F$-quasisplit classical group.
For any generic global Arthur parameter $\psi\in\wt{\Psi}_2(G)$, Conjecture \ref{cubmfc} holds for the automorphic $L^2$-packet
$\wt{\Pi}_{\psi}(\epsilon_\psi)$ attached to $\psi$.
\end{thm}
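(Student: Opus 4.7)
The plan is to reduce the three parts of Conjecture \ref{cubmfc} for a generic global Arthur parameter to the generic form of Shahidi's conjecture for classical groups, which is accessible through the theory of automorphic descent.

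First I would compute $\eta_{\frak{g}^\vee,\frak{g}}(\ul{p}(\psi))$ in the generic setting. Since $\psi\in\wt{\Psi}_2(G)$ is generic, every $b_i=1$ in \eqref{ap}, so $\psi=(\tau_1,1)\boxplus\cdots\boxplus(\tau_r,1)$ and hence $\ul{p}(\psi)=[1^N]$. This is the partition parameterizing the zero nilpotent orbit in $\frak{g}^\vee$. The Barbasch-Vogan duality interchanges the zero orbit with the principal (regular) nilpotent orbit, so in each of the types listed in Section 2 one verifies case by case that $\eta_{\frak{g}^\vee,\frak{g}}([1^N])=\ul{p}_{\reg}$, the regular partition for $G$. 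With this identification, a nonzero Fourier coefficient attached to $\eta_{\frak{g}^\vee,\frak{g}}(\ul{p}(\psi))$ is nothing but a nonzero Whittaker-Fourier coefficient, so the members of $\wt{\Pi}_\psi(\epsilon_\psi)$ with $\eta_{\frak{g}^\vee,\frak{g}}(\ul{p}(\psi))\in\frak{p}^m(\pi)$ are precisely the globally generic ones.

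With this computation, Parts (1) and (2) of Conjecture \ref{cubmfc} become formal. Since the regular partition is the unique maximum in the dominance order on partitions of type $(N_G,G)$, there is no $\ul{p}$ with $\ul{p}>\ul{p}_{\reg}$, giving (1). For (2), every partition $\ul{p}\in\frak{p}^m(\pi)$ trivially satisfies $\ul{p}\leq\ul{p}_{\reg}$, regardless of whether $\frak{p}^m(\pi)$ is a singleton. All the substance of the theorem is therefore in Part (3): one must exhibit at least one $\pi\in\wt{\Pi}_\psi(\epsilon_\psi)$ carrying a nonzero Whittaker-Fourier coefficient.

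For Part (3), the strategy is to invoke the automorphic descent of Ginzburg-Rallis-Soudry \cite{GRS11}. Starting from the isobaric sum $\tau=\tau_1\boxplus\cdots\boxplus\tau_r$ on $\RR_{F'/F}(\GL_N)$ associated to $\psi$, the descent construction produces a nonzero cuspidal automorphic representation $\sigma$ on $G$, and the main non-vanishing theorem of \cite{GRS11} asserts that $\sigma$ is globally generic. It then remains to check that $\sigma$ belongs to the Arthur $L^2$-packet $\wt{\Pi}_\psi(\epsilon_\psi)$; this follows from the endoscopic classification of Arthur \cite{Ar13} and Mok \cite{Mk14}, together with the fact that the weak Langlands functorial transfer of $\sigma$ to $\RR_{F'/F}(\GL_N)$ is precisely $\tau$. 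The main obstacle in this last step is bookkeeping: for the unitary case one must select the $L$-embedding $\xi_{\chi_\kappa}$ compatibly with the descent data used, and for the even orthogonal case one must keep track of the outer automorphism in identifying the packets; once these compatibilities are in place, the membership $\sigma\in\wt{\Pi}_\psi(\epsilon_\psi)$ (with the correct character $\epsilon_\psi$, which is trivial for generic $\psi$) follows from the uniqueness in the endoscopic classification.
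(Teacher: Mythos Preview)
Your proposal is correct and follows essentially the same approach as the paper: compute that $\eta_{\frak{g}^\vee,\frak{g}}(\ul{p}(\psi))$ is the principal partition so that Parts (1) and (2) are vacuous, and then establish Part (3) by invoking the automorphic descent of \cite{GRS11} to produce a generic cuspidal member whose functorial transfer to $\GL_N$ is the isobaric sum $\tau$, hence lies in $\wt{\Pi}_\psi(\epsilon_\psi)$ by the endoscopic classification of \cite{Ar13} and \cite{Mk14}. Your additional remarks on the choice of $\xi_{\chi_\kappa}$ in the unitary case and the outer automorphism in the even orthogonal case are sensible caveats that the paper leaves implicit.
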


\begin{proof}
We recall that a global Arthur parameter $\psi$ as in \eqref{ap} is called {\sl generic}, as defined in \cite[Chapter 1]{Ar13}, if
$b_i=1$ for $i=1,2,\cdots,r$. The partition $\udl{p}(\psi)$ attached to any generic global Arthur parameter $\psi$ is the trivial
partition $[1^*]$. It is an easy calculation that the Barbasch-Vogan duality $\eta_{{\frak{g}^\vee,\frak{g}}}(\underline{p}(\psi))$ must be
the principal partition of type $(N_G,G)$. Hence Parts (1) and (2) are automatic. It remains to prove Part (3).

According to \cite[Theorem 1.5.2]{Ar13} and \cite[Theorem 2.5.2]{Mk14}, any member $\pi$ in the automorphic $L^2$-packet
$\wt{\Pi}_{\psi}(\epsilon_\psi)$ attached to $\psi$ has the isobaric representation
$$
\tau:=\tau_1\boxplus\tau_2\boxplus\cdots\boxplus\tau_r,
$$
following the notation in \eqref{ap}, as the image of the global Arthur-Langlands transfer to $\GL_N$. Note that as part of the definition
of global Arthur parameters, $\tau_i\not\cong\tau_j$ if $i\neq j$.
Now by \cite{GRS11}, the automorphic descent method constructs an irreducible generic cuspidal automorphic representation
$\pi_0$ of $G(\BA)$ which
has the Langlands functorial transfer $\tau$. Hence $\pi_0$ belongs to $\wt{\Pi}_{\psi}(\epsilon_\psi)$ and has a nonzero
Fourier coefficient parameterized by $\eta_{{\frak{g}^\vee,\frak{g}}}(\underline{p}(\psi))$, which is the principal partition of type $(N_G,G)$.
This proves Part (3).
\end{proof}

By the local endoscopic classification of tempered representations (\cite{Ar13} and \cite{Mk14}) any global tempered $L$-packet has
a generic Arthur parameter and hence, by Theorem \ref{gap}, contains a generic member. This confirms the global version of the Shahidi conjecture
that any global tempered $L$-packet has a member with a non-zero Whittaker-Fourier coefficient.

On the other hand, we consider a global Arthur parameter of simple type,
$\psi=(\tau,b)\in\wt{\Psi}_2(G)$,
with $\tau\in\CA_\cusp(a)$ being conjugate self-dual.

If take $G=\SO_{2n+1}$, then $N_G=2n=ab$. Hence $\tau$ is of orthogonal type if and only if $b=2l$; and
$\tau$ is of symplectic type if and only if $b=2l+1$. The partition attached to $\psi$ is
$\udl{p}(\psi)=[b^a]$, and the Barbasch-Vogan dual is
$$
\eta_{{\frak{g}^\vee,\frak{g}}}(\underline{p}(\psi))=
\begin{cases}
[(a+1)a^{b-2}(a-1)1]&\text{if}\ b=2l \text{ and a is even};\\
[a^b1]&\text{if}\ b=2l \text{ and a is odd};\\
[(a+1)a^{b-1}]&\text{if}\ b=2l+1.
\end{cases}
$$

If take $G=\SO_{2n}$, then $N_G=2n=ab$. Hence $\tau$ is of orthogonal type if and only if $b=2l+1$; and
$\tau$ is of symplectic type if and only if $b=2l$. The partition attached to $\psi$ is
$\udl{p}(\psi)=[b^a]$, and the Barbasch-Vogan dual is
$$
\eta_{{\frak{g}^\vee,\frak{g}}}(\underline{p}(\psi))=
\begin{cases}
[a^b]&\text{if}\ b=2l;\\
[a^{b-1}(a-1)1]&\text{if}\ b=2l+1.
\end{cases}
$$

If take $G=\Sp_{2n}$, then $N_G=2n+1=ab$. Hence $\tau$ must be of orthogonal type and $b=2l+1$ and $a=2e+1$. The partition attached to $\psi$ is
$\udl{p}(\psi)=[b^a]$, and the Barbasch-Vogan dual is
$$
\eta_{{\frak{g}^\vee,\frak{g}}}(\underline{p}(\psi))=[a^{b-1}(a-1)].
$$

It seems not so straightforward to deal with Conjecture \ref{cubmfc} for simple global Arthur parameters $\psi=(\tau,b)$, and hence it is expected
to be more involved in the general case.

We will discuss the recent progress towards Conjecture \ref{cubmfc}. For example, when $G=\Sp_{2n}$, take Arthur parameters of form $\psi=(\tau,b)\boxplus (1_{\GL_1(\BA)}, 1)$, where $\tau$ is an irreducible cuspidal representation of
$\GL_{2k}(\BA)$ and is of symplectic type, and $b$ is even, one has that $\ul{p}(\psi)=[b^{(2k)}1]$. In this case,
Conjecture \ref{cubmfc} has been proved by the second named author in \cite{L13}, where it is also shown that
$\frak{p}^m(\pi)$ contains only one partition for certain residual representations in the automorphic $L^2$-packets corresponding to this particular family of global Arthur parameters.

\section{Progress towards Conjecture \ref{cubmfc}}

We discuss what have been proved about Conjecture \ref{cubmfc} when $G=\Sp_{2n}$. First of all, the work of Ginzburg-Rallis-Soudry
(\cite{GRS03}) provides with preliminary results on Fourier coefficients of cuspidal automorphic representations of $\Sp_{2n}(\BA)$.
We continue with the topic in order to understand Conjecture \ref{cubmfc}.
Here we summarize what we have done and discuss what more needs to be done. Hopefully, the methods are extendable to
all quasisplit classical groups.

\subsection{Part (1) of Conjecture \ref{cubmfc}}
For a general global Arthur parameter $\psi\in\wt{\Psi}_2(\Sp_{2n})$,
Part (1) of Conjecture \ref{cubmfc} is completely proved in \cite{JL13b}, using local-global argument.
All representations $\pi$ in $\wt{\Pi}_{\psi}(\epsilon_\psi)$ are nearly equivalent, i.e.,
for almost all finite places, they have the same unramified local component $\pi_v$ which corresponds to the local Arthur parameter $\psi_v$.
On one hand, any Fourier coefficient associated to $\ul{p}$ gives a local functional for $\pi_v$,
and produces the corresponding Jacquet module. On the other hand, using the theory of local descent, one can show that
for any symplectic partition $\ul{p} >\eta_{{\frak{g}^\vee,\frak{g}}}(\underline{p}(\psi))$,
$\pi_v$ has no nonzero such Jacquet modules corresponding to Fourier coefficients associated to $\ul{p}$. We expect that the same
arguments applicable to other quasisplit classical groups.

\subsection{Part (2) of Conjecture \ref{cubmfc}}
If we assume Conjecture \ref{singleton} that $\frak{p}^m(\pi)$ contains only one partition, then Part (2) of Conjecture \ref{cubmfc}
essentially follows from Parts (1) and (3) of Conjecture \ref{cubmfc} plus certain local constraints at unramified local places as
discussed in \cite{JL13b}. However, without assuming Conjecture \ref{singleton},
Part (2) of Conjecture \ref{cubmfc} is also settled in \cite{JL13b} partially, namely, any symplectic partition $\ul{p}$ of $2n$, if $\ul{p}$ is bigger than $\eta_{{\frak{g}^\vee,\frak{g}}}(\underline{p}(\psi))$ under the lexicographical ordering, does
not belong to $\frak{p}^m(\pi)$ for any $\pi\in\wt{\Pi}_{\psi}(\epsilon_\psi)$. It seems not an easy problem in general to resolve
Part (2) of Conjecture \ref{cubmfc} without Conjecture \ref{singleton}, which by itself is a hard problem.

\subsection{Part (3) of Conjecture \ref{cubmfc}: residual case}
We start in \cite{JL14} to investigate Part (3) of Conjecture \ref{cubmfc} for $G=\Sp_{2n}$.

The idea is to construct or determine a particular
member in a given automorphic $L^2$-packet $\wt{\Pi}_\psi(\epsilon_\psi)$ attached to a general global Arthur parameter $\psi$, whose
Fourier coefficients achieve the partition $\eta_{{\frak{g}^\vee,\frak{g}}}(\ul{p}(\psi))$. We expect that such members should be the
distinguished members in $\wt{\Pi}_\psi(\epsilon_\psi)$, following the Whittaker normalization in the sense of Arthur for global generic
Arthur parameters (\cite{Ar13}). Since Theorem \ref{gap} confirms Conjecture \ref{cubmfc} for all generic global Arthur parameters,
we will consider only the global Arthur parameters which are non-generic.

For general non-generic global Arthur parameters, the distinguished members in $\wt{\Pi}_\psi(\epsilon_\psi)$ can be
certain residual representations determined by $\psi$ as conjectured by M\oe glin in \cite{M08} and \cite{M11}, or
certain cuspidal automorphic representations, which may be explicitly constructed through the framework of endoscopy
correspondences as outlined in \cite{J14}. Due to the different nature of the construction methods, we have to treat them separately,
in order to prove Part (3) of Conjecture \ref{cubmfc}. Of course, it will also be very interesting to find a proof of
Part (3) of Conjecture \ref{cubmfc} without using construction methods. For the moment, the authors have no idea to proceed with this approach.

When the distinguished members $\pi$ in a given $\wt{\Pi}_\psi(\epsilon_\psi)$ are residual representations, our method is to establish,
through the explicit construction of residual representations from the given cuspidal data,
the nonvanishing of a Fourier coefficients of $\pi$ attached to the partition $\eta_{{\frak{g}^\vee,\frak{g}}}(\ul{p}(\psi))$, in terms of
the construction data, i.e. the non-generic global Arthur parameter $\psi$ that we started with.

It is clear that such a method can be viewed as a natural extension of the well-known
Langlands-Shahidi method from generic Eisenstein series (\cite{Sh10}) to non-generic Eisenstein series, and in  particular to the singularity
of Eisenstein series, i.e. the residues of Eisenstein series. Of course, this method can also be viewed as an extension of the
automorphic descent method of Ginzburg-Rallis-Soudry for a particular residual representations (\cite{GRS11}) to general residual representations.
For more detailed discussions in this aspect, we refer to \cite{J14}.

In \cite{JL14}, we tested our method for these non-generic global Arthur parameters $\psi$, whose
automorphic $L^2$-packets $\wt{\Pi}_\psi(\epsilon_\psi)$ contain the residual representations that are completely determined
in our work joint with Zhang (\cite{JLZ13}). Those non-generic global Arthur parameters of $\Sp_{2n}(\BA)$ are of the following form
$$
\psi=(\tau_1,b_1) \boxplus \boxplus_{i=2}^r (\tau_i,1),\ \text{with}\  b_1 > 1.
$$
This leads to the following three cases, depending on the symmetry of $\tau_1$ and the relation of $\tau_1$ with $\tau_i$ for $i=2,3,\cdots,r$.

\textbf{Case \Rmnum{1}:} $\psi=(\tau, 2b+1) \boxplus \boxplus_{i=2}^r (\tau_i,1),$
where $b \geq 1$, $\tau \ncong \tau_i$, for any $2 \leq i \leq r$.

\textbf{Case \Rmnum{2}:} $\psi=(\tau, 2b+1) \boxplus (\tau, 1) \boxplus_{i=3}^r (\tau_i,1),$
where $b \geq 1$, $\tau \ncong \tau_i$, for any $3 \leq i \leq r$.

\textbf{Case \Rmnum{3}:} $\psi=(\tau, 2b) \boxplus \boxplus_{i=2}^r (\tau_i,1),$
where $b \geq 1$.

Since the global Arthur parameter $\psi$ is of orthogonal type, the irreducible cuspidal automorphic representation $\tau$ of $\GL_a(\BA)$
is of orthogonal type in \textbf{Case \Rmnum{1}} and \textbf{Case \Rmnum{2}}, and of symplectic type in \textbf{Case \Rmnum{3}}. Of course,
the $\tau_i$'s are of orthogonal type in all three cases.

When $\tau$ is of orthogonal type and hence in both \textbf{Case \Rmnum{1}} and \textbf{Case \Rmnum{2}}, the corresponding
residual representations described in \cite{JLZ13} are always nonzero.
When $\tau$ is of symplectic type, as long as $r\geq 2$, the relation between
$\tau$ and $\tau_i$ determined by the non-vanishing of the central values $L(\frac{1}{2},\tau\times\tau_i)$, for $i=2,3,\cdots,r$,
controls the non-vanishing of the corresponding residual representation in this case. Hence the corresponding Gan-Gross-Prasad conjecture (\cite{GGP12}) is expected to be involved in the structure of the automorphic $L^2$-packet $\wt{\Pi}_{\psi}(\epsilon_\psi)$ and
the structure of Fourier coefficients for the members therein. We investigated the case when
$\wt{\Pi}_{\psi}(\epsilon_\psi)$ contains a residual representation. When the automorphic $L^2$-packet $\wt{\Pi}_{\psi}(\epsilon_\psi)$
does not contain any residual representation, the situation is more involved, and will be discussed briefly in the next section.

Let $G=\Sp_{2n}$ and let $P^{2n}_r = M^{2n}_r N^{2n}_r$ (with $1 \leq r \leq n$) be the standard parabolic subgroup
of ${\Sp}_{2n}$ with Levi part $M^{2n}_r$ isomorphic to $\GL_r \times \Sp_{2n-2r}$,
$N^{2n}_r$ is the unipotent radical. The discussion may also involved in the metaplectic double cover $\wt{\Sp}_{2n}(\BA)$ of $\Sp_{2n}(\BA)$
when consider the Fourier-Jacobi coefficients. In such a situation, we let
$\wt{P}^{2n}_r(\BA)=\wt{M}^{2n}_r(\BA) N^{2n}_r(\BA)$ be the pre-image of $P^{2n}_r(\BA)$ in $\wt{\Sp}_{2n}(\BA)$ (the superscript $2n$ may be dropped when there is no confusion).

{\bf Case I:}\
 $\psi$ has the following form
 \begin{equation}\label{case1psi}
 \psi=(\tau, 2b+1) \boxplus \boxplus_{i=2}^r (\tau_i,1),
 \end{equation}
where $b \geq 1$, $\tau \ncong \tau_i$, for any $2 \leq i \leq r$. Assume $\tau$ is an irreducible cuspidal representation of $\GL_a(\BA)$ with central character $\omega_{\tau}$, $\tau_i$ is an irreducible cuspidal representation of $\GL_{a_i}(\BA)$ with central character $\omega_{\tau_i}$, $2 \leq i \leq r$.
Then, by the definition of the global Arthur parameters for $\Sp_{2n}(\BA)$,
$\tau$ and $\tau_i$'s are all of orthogonal type; $\tau_i \ncong \tau_j$, for $2 \leq i \neq j \leq r$; $2n+1=a(2b+1)+\sum_{i=2}^r {a_i}$, and $\omega_{\tau}^{2b+1} \cdot \prod_{i=2}^r \omega_{\tau_i} = 1$. Consider the isobaric sum representation
$$\pi=\tau \boxplus \tau_2 \boxplus \cdots \boxplus \tau_r$$
of $\GL_{2m+1}(\BA)$, where $2m+1=a+\sum_{i=2}^r a_i = 2n+1-2ab$. Note that $\pi$ has central character $\omega_{\pi}=\omega_{\tau} \cdot \prod_{i=2}^r \omega_{\tau_i} = 1$ and the following
$$
a\leq 2m+1= 2n+1-2ab
$$
holds.

By the theory of automorphic descent \cite[Theorem 3.1]{GRS11}, there is an irreducible generic cuspidal representation $\sigma$ of
$\Sp_{2n-2ab}(\BA)$ such that $\sigma$ has the functorial transfer $\pi$. This can be regarded as Part (3) of
Conjecture \ref{cubmfc} for the generic global Arthur parameter
$$
\psi_\pi=(\tau,1) \boxplus (\tau_2,1) \boxplus \cdots \boxplus (\tau_r,1).
$$
It follows that $L(s, \tau \times \sigma)$ has a (simple) pole at $s=1$.

Let $\Delta(\tau, b)$ be the Speh residual representation in the discrete spectrum of
$\GL_{ab}(\BA)$ (see \cite{MW89}, or \cite[Section 1.2]{JLZ13}).
For any automorphic form
$$
\phi \in \CA(N_{ab}(\BA)M_{ab}(F) \bs \Sp_{2ab+2m}(\BA))_{\Delta(\tau,b) \otimes \sigma},
$$
following \cite{L76} and \cite{MW95}, one has a residual Eisenstein series
$$
E(\phi,s)(g)=E(g,\phi_{\Delta(\tau,b) \otimes \sigma},s).
$$
We refer \cite{JLZ13} for particular details about this family of Eisenstein series. In particular, it is proved in \cite{JLZ13} that
$E(\phi,s)(g)$ has a simple pole at $\frac{b+1}{2}$, which is the right-most one. We denote by $\CE(g,\phi)$ the residue,
which is square-integrable. They generate the residual representation $\CE_{\Delta(\tau, b)\otimes \sigma}$ of $\Sp_{2n}(\BA)$.
Following \cite[Section 6.2]{JLZ13}, the global Arthur parameter of this nonzero square-integrable automorphic representation
$\CE_{\Delta(\tau,b)\otimes \sigma}$ is exactly
$$
\psi=(\tau, 2b+1) \boxplus \boxplus_{i=2}^r (\tau_i,1)
$$
as in \eqref{case1psi}.

\begin{thm}[\cite{JL14}]\label{main1}
For any global Arthur parameter of the form
$$
\psi=(\tau, 2b+1) \boxplus \boxplus_{i=2}^r (\tau_i,1)
$$
with $b \geq 1$, and $\tau \ncong \tau_i$ for any $2 \leq i \leq r$,
the residual representation $\CE_{\Delta(\tau, b)\otimes \sigma}$ has a nonzero Fourier coefficient attached to
the Barbasch-Vogan duality $\eta_{\frak{so}_{2n+1}, \frak{sp}_{2n}}(\ul{p}(\psi))$ of the partition $\ul{p}(\psi)$ associated to
$(\psi,\SO_{2n+1}(\BC))$.
\end{thm}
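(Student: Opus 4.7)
The plan is to exploit the explicit realization of $\CE_{\Delta(\tau,b) \otimes \sigma}$ as the residue at $s = \frac{b+1}{2}$ of the Eisenstein series attached to the parabolic $P_{ab}^{2n}$ with Levi $\GL_{ab} \times \Sp_{2m}$ and cuspidal datum $\Delta(\tau,b) \otimes \sigma$, and to reduce the nonvanishing of the target Fourier coefficient to two known nonvanishing statements about the inducing data.

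First I would compute $\eta := \eta_{\frak{so}_{2n+1}, \frak{sp}_{2n}}(\ul{p}(\psi))$ for $\ul{p}(\psi) = [(2b+1)^a \, 1^{\sum_{i=2}^r a_i}]$. Since the transpose of $\ul{p}(\psi)$ is $[2m+1,\, a^{2b}]$ with $2m+1 = a + \sum_{i=2}^r a_i$, applying the appropriate symplectic collapse produces a symplectic partition of $2n$ of the shape $[a^{2b}, *]$, where the tail $*$ is controlled by $2m+1-a = \sum_{i=2}^r a_i$. I would then fix an $\frak{sl}_2$-triple with $X$ in the corresponding $F$-stable orbit, and build the unipotent radical $V_X$ and character $\psi_X$ so that the block structure of $V_X$ is compatible with the Levi decomposition $\GL_{ab} \times \Sp_{2m}$ coming from $P_{ab}$.

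The core of the argument is a sequence of root-exchange manipulations, using the extended technical lemmas discussed later in this paper (which generalize the Ginzburg--Rallis--Soudry swap lemma). The goal is to move the $V_X$-integration so that the inner integrand becomes a Fourier coefficient of $\Delta(\tau,b)$ on $\GL_{ab}$ attached to the rectangular partition $[a^b]$, times a Whittaker coefficient of $\sigma$ on $\Sp_{2m}$. Concretely, after writing $\CE(g,\phi)$ and unwinding it so that a Bruhat-type decomposition compatible with $V_X$ is visible, each exchange replaces a unipotent integral inside $V_X$ by an integral over a conjugate unipotent lying inside $N_{ab}$, at the cost of a compensating integral that either vanishes by cuspidality of $\tau$ and $\sigma$ (eliminating unwanted orbits, which ties in with Part~(1) of Conjecture~\ref{cubmfc}) or produces precisely one of the two target models. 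This step is the main technical obstacle: one must organize the commutator relations and iterate the exchange so that the leftover unipotent integrations match the desired models on $\GL_{ab}$ and $\Sp_{2m}$ without leftover terms.

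With the reduction in hand, nonvanishing follows from two well-understood inputs: (i) the Speh residual representation $\Delta(\tau,b)$ supports a nonzero Fourier coefficient of rectangular/Shalika type $[a^b]$, which is a standard consequence of its construction by iterated residues from the cuspidal generic $\tau$; and (ii) $\sigma$ is a generic cuspidal representation of $\Sp_{2m}(\BA)$ by the automorphic descent input recalled above, so it admits a nonzero Whittaker coefficient. Choosing $\phi$ so as to pair nontrivially with both models yields $\CF^{\psi_X}(\CE(\cdot,\phi)) \not\equiv 0$, completing the proof.
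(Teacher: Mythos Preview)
Your overall strategy---unfold the Eisenstein residue, apply root-exchange lemmas, and reduce to the rectangular coefficient $[a^b]$ on $\Delta(\tau,b)$ together with the Whittaker model of $\sigma$---is indeed the shape of the argument in \cite{JL14}, and matches how the paper organizes the proof of Theorem~\ref{main12}. However, there is a genuine gap in your reduction that the paper handles explicitly and that you have glossed over.

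Your computation of $\eta_{\frak{so}_{2n+1},\frak{sp}_{2n}}(\ul{p}(\psi))$ is incomplete. You correctly identify the transpose $[2m+1,\,a^{2b}]$, but then assert that the symplectic collapse always yields a partition of shape $[a^{2b},*]$. This is false when $a\leq 2m$ and $a$ is odd: in that case the Barbasch--Vogan dual is
\[
[(2m)(a+1)(a)^{2b-2}(a-1)],
\]
which is \emph{not} of the form you describe, and for which a direct root-exchange reduction to $[a^b]$ on the $\GL_{ab}$ factor is not available. The paper's proof hinges on recognizing that this special partition is exactly the $\Sp_{2n}$-expansion of the non-special symplectic partition $[(2m)(a)^{2b}]$, and then invoking Theorem~\ref{special} (proved separately in \cite{JL15}): one first establishes the nonzero Fourier coefficient for the simpler partition $[(2m)(a)^{2b}]$ via the root-exchange machinery you outline, and only then upgrades to the target special partition via the expansion theorem. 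Without this two-step reduction, your argument does not reach the correct target orbit in the odd-$a$ case. You should split into the three cases $a=2m+1$, $a\leq 2m$ even, $a\leq 2m$ odd, and in the last case insert Theorem~\ref{special} before attempting the root-exchange reduction.
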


Before we go into the idea of the proof we explicate the partition
$\eta_{\frak{so}_{2n+1}, \frak{sp}_{2n}}(\ul{p}(\psi))$.
In general,
given any partition $\ul{q} =[q_1 q_2 \cdots q_r]$ for $\frak{so}_{2n+1}(\BC)$ with $q_1 \geq q_2 \geq \cdots \geq q_r > 0$,
whose even parts occurring with even multiplicity. Let $\ul{q}^- =[q_1 q_2 \cdots q_{r-1} (q_r-1)]$, the Barbasch-Vogan duality
$\eta_{\frak{so}_{2n+1}, \frak{sp}_{2n}}$, following \cite[Definition A1]{BV85} and \cite[Section 3.5]{Ac03}, is defined by
$$
\eta_{\frak{so}_{2n+1}, \frak{sp}_{2n}}
(\ul{q}) := ((\ul{q}^-)_{\Sp_{2n}})^t,
$$
where $(\ul{q}^-)_{\Sp_{2n}}$ is the $\Sp_{2n}$-collapse of $\ul{q}^-$, which is the biggest special symplectic partition which is smaller than $\ul{q}^-$.

For $\psi=(\tau, 2b+1) \boxplus \boxplus_{i=2}^r (\tau_i,1)$ in \textbf{Case \Rmnum{1}}, as discussed in \cite[Section 4]{J14}, one has
$$
\ul{p}(\psi)=[(2b+1)^{a}(1)^{2m+1-a}].
$$
When $a=2m+1$, one obtains that
\begin{align*}
\eta_{\frak{so}_{2n+1}, \frak{sp}_{2n}}(\ul{p}(\psi))
= \ & \eta_{\frak{so}_{2n+1}, \frak{sp}_{2n}}([(2b+1)^{a}(1)^{2m+1-a}])\\
= \ & (([(2b+1)^{a}]^-)_{\Sp_{2n}})^t \\
= \ & [(2b+1)^{a-1}(2b)]^t\\
= \ & [(a-1)^{2b+1}]+[(1)^{2b}]\\
= \ & [(a)^{2b}(a-1)]\\
= \ & [(a)^{2b}(2m)].
\end{align*}
When $a \leq 2m$ and $a$ is even, one obtains that
\begin{align*}
\eta_{\frak{so}_{2n+1}, \frak{sp}_{2n}}(\ul{p}(\psi))
= \ & \eta_{\frak{so}_{2n+1}, \frak{sp}_{2n}}([(2b+1)^{a}(1)^{2m+1-a}])\\
= \ & [(2b+1)^a(1)^{2m-a}]^t\\
= \ & [(a)^{2b+1}] + [(2m-a)]\\
= \ & [(2m)(a)^{2b}].
\end{align*}
Finally, when $a \leq 2m$ and $a$ is odd, one obtains that
\begin{align*}
\eta_{\frak{so}_{2n+1}, \frak{sp}_{2n}}(\ul{p}(\psi))
= \ & \eta_{\frak{so}_{2n+1}, \frak{sp}_{2n}}([(2b+1)^{a}(1)^{2m+1-a}])\\
= \ & ([(2b+1)^a(1)^{2m-a}]_{\Sp_{2n}})^t\\
= \ & [(2b+1)^{a-1}(2b)(2)(1)^{2m-a-1}]^t\\
= \ & [(a-1)^{2b+1}] + [(1)^{2b}] + [(1)^2] + [(2m-1-a)]\\
= \ & [(2m)(a+1)(a)^{2b-2}(a-1)].
\end{align*}

The strategy to prove Theorem \ref{main1} is as follows.
Given a symplectic partition $\ul{p}$ of $2n$ (that is, odd parts occur with even multiplicities),
Denote by $\ul{p}^{\Sp_{2n}}$ the $\Sp_{2n}$-expansion of $\ul{p}$, which is the smallest special symplectic partition that is
bigger than $\ul{p}$.
In \cite{JL15}, we proved the following theorem which provides a crucial reduction in the proof of Theorem \ref{main1}. The main idea of the proof will be explained in Section 5.3.

\begin{thm}[Theorem 4.1 \cite{JL15}]\label{special}
Let $\pi$ be an irreducible automorphic representation of
$\Sp_{2n}(\BA)$. If $\pi$ has a nonzero Fourier coefficient attached to a
non-special symplectic partition $\ul{p}$ of $2n$, then $\pi$ must have a nonzero
Fourier coefficient attached to $\ul{p}^{\Sp_{2n}}$, the $\Sp_{2n}$-expansion of the partition $\ul{p}$.
\end{thm}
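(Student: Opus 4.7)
The plan is to argue by induction on the number of elementary raising moves required to pass from $\ul{p}$ to $\ul{p}^{\Sp_{2n}}$ in the dominance order, reducing everything to a single root-exchange step in the style of \cite{GRS11}, implemented via the technical unipotent-exchange lemmas alluded to at the end of this paper.

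First I would establish the combinatorial reduction. A symplectic partition $\ul{p}$ of $2n$ fails to be special precisely when the transpose $\ul{p}^t$ contains some even part of odd multiplicity, and in this case one can locate explicit indices $i_0 < j_0$ such that the partition $\ul{p}'$ obtained from $\ul{p}$ by raising $p_{i_0}$ by one and lowering $p_{j_0}$ by one is again symplectic, strictly dominates $\ul{p}$, and is still dominated by $\ul{p}^{\Sp_{2n}}$. Iterating this elementary raising reaches $\ul{p}^{\Sp_{2n}}$ in finitely many steps. It therefore suffices to prove the single-step claim: if $\pi$ has a nonzero Fourier coefficient attached to a non-special $\ul{p}$ and $\ul{p}'$ is such an elementary raise, then $\pi$ has a nonzero Fourier coefficient attached to $\ul{p}'$.

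Second, I would convert this single-step claim into a comparison of unipotent integrals. For $X$ in the stable orbit $\CO^{\st}_{\ul{p}}$ and $X'$ in the stable orbit $\CO^{\st}_{\ul{p}'}$, the associated unipotent subgroups $V_X, V_{X'} \subset \Sp_{2n}$ and their characters $\psi_X, \psi_{X'}$ (constructed from the respective $\mathfrak{sl}_2$-triples) agree outside a small corner of root spaces determined by $(i_0, j_0)$. Inside this corner one isolates a pair of abelian unipotent subgroups $C$ and $D$, each a direct sum of a few root spaces, sitting inside $V_X$ and $V_{X'}$ respectively and pairing nondegenerately under the symplectic form inherited from $\Sp_{2n}$. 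This is precisely the data for which the root-exchange lemma is designed.

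Finally I would apply the technical unipotent-exchange lemma in the form discussed at the end of this paper. Given the nondegenerate pairing of $C$ with $D$ and the agreement of $\psi_X$ with $\psi_{X'}$ on the common part $V_X \cap V_{X'}$, this lemma transfers the nonvanishing of $\CF^{\psi_X}(\varphi_\pi)$ to the nonvanishing of an auxiliary integral which, after Fourier expansion along the abelian quotient and selection of the generic piece, coincides with $\CF^{\psi_{X'}}(\varphi'_\pi)$ for some vector $\varphi'_\pi$ in the space of $\pi$. The main obstacle I expect is the verification, at each elementary step, that the hypothesis of the exchange lemma actually holds: the complementary unipotent piece must act trivially on the character being tested, which amounts to a case analysis of the local configuration of parts at positions $i_0$ and $j_0$, distinguishing equal versus distinct parities, adjacent versus non-adjacent, and the behavior at the boundary where the raising takes place. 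The combinatorial characterization of non-special symplectic partitions guarantees that a configuration meeting the hypotheses always exists, but establishing this uniformly across the subcases is where the bulk of the technical work sits.
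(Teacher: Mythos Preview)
Your proposal is plausible and uses the right toolkit (root exchange, Fourier expansion along abelian quotients), but it is organized differently from the argument the paper sketches in Section~5.3, and it misses the structural shortcut that makes the proof clean.

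The paper does not induct on elementary $(+1,-1)$ moves in the dominance order. Instead it views the Fourier coefficient $\CF^{\psi_{\ul p,\CO}}(\varphi_\pi)$ as an automorphic function on the Levi part of the stabilizer of $\psi_{\ul p,\CO}$, which for a symplectic partition is a product of orthogonal and symplectic factors. The key observation is that $\ul p$ being non-special forces a genuine symplectic factor $\Sp_{2m}$ to appear in this stabilizer, and \emph{any} nonzero automorphic function on $\Sp_{2m}$ has a nonzero Fourier coefficient attached to the minimal nontrivial partition $[2\,1^{2m-2}]$. Composing this inner Fourier coefficient with the outer one and applying the root-exchange lemmas of Section~6 produces a nonzero Fourier coefficient of $\pi$ attached to a strictly larger symplectic partition, which one then identifies with (or iterates up to) $\ul p^{\Sp_{2n}}$. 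The nonvanishing input is thus automatic from automorphy on the symplectic factor, and no case analysis on the local configuration of parts is needed.

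Your route, by contrast, builds the raise out of combinatorial single-box moves and must verify at each step both that the intermediate partition stays symplectic and that the hypotheses of the exchange lemma hold, which is exactly the case analysis you flag as the main obstacle. This can presumably be pushed through, but the stabilizer viewpoint replaces that analysis with a single representation-theoretic fact about $\Sp_{2m}$; it also explains conceptually \emph{why} non-specialness is the right hypothesis (it is equivalent to the presence of the relevant symplectic factor). If you want to align with the paper, reframe the single-step claim as: restrict $\CF^{\psi_{\ul p,\CO}}(\varphi_\pi)$ to the symplectic factor of the stabilizer, Fourier-expand there along the minimal orbit, and then use Lemma~\ref{nvequ} (or its iterated form Lemma~\ref{nvequ2}) to reassemble the composite as a Fourier coefficient of $\pi$ attached to the larger partition.
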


If $a \leq 2m$ and $a$ is odd, then by the recipe of
obtaining $\Sp_{2n}$-expansion of a symplectic partition $\ul{p}$
given in Lemma 6.3.9 of \cite{CM93}, we can see that
$$[(2m)(a+1)(a)^{2b-2}(a-1)] = [(2m)(a)^{2b}]^{\Sp_{2n}}.$$
Hence it suffices to prove the following theorem.

\begin{thm}[\cite{JL14}]\label{main12}
For any global Arthur parameter of the form
$$
\psi=(\tau, 2b+1) \boxplus \boxplus_{i=2}^r (\tau_i,1)
$$
with $b \geq 1$, and $\tau \ncong \tau_i$ for any $2 \leq i \leq r$, the followings hold:
\begin{enumerate}
\item If $a=2m+1$, then $\CE_{\Delta(\tau, b)\otimes \sigma}$ has a nonzero Fourier coefficient attached to $[(a)^{2b}(2m)]$.
\item If $a \leq 2m$, then $\CE_{\Delta(\tau, b)\otimes \sigma}$ has a nonzero Fourier coefficient attached to $[(2m)(a)^{2b}]$.
\end{enumerate}
\end{thm}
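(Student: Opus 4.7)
The plan is to work with the explicit construction of $\CE_{\Delta(\tau,b)\otimes\sigma}$ as the residue at $s=\tfrac{b+1}{2}$ of the Eisenstein series $E(\phi,s)$ on $\Sp_{2n}(\BA)$ attached to the parabolic $P_{ab}$ with Levi $\GL_{ab}\times\Sp_{2m}$ and cuspidal datum $\Delta(\tau,b)\otimes\sigma$, and to reduce the non-vanishing of the Fourier coefficient attached to $\ul{p}$ (equal to $[(a)^{2b}(2m)]$ in Case (1) and $[(2m)(a)^{2b}]$ in Case (2)) to two already understood non-vanishing results: the Fourier coefficient of the Speh representation $\Delta(\tau,b)$ on $\GL_{ab}(\BA)$ attached to the partition $[b^a]$, which is its top partition in the sense of Ginzburg-Rallis-Soudry \cite{GRS03,GRS11}, and the Whittaker coefficient of the generic cuspidal descent $\sigma$, which is nonzero by construction.

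First I would fix a representative $X\in \CO_{\ul{p}}^{\st}$ through an $\frak{sl}_2$-triple aligned with the $P_{ab}$-structure, so that the unipotent subgroup $V_{\ul{p}}$ and character $\psi_{\ul{p}}$ decompose compatibly with the parabolic: the restriction of $\psi_{\ul{p}}$ to the Levi-portion of $V_{\ul{p}}$ should be the tensor product of the $[b^a]$-Fourier character on the $\GL_{ab}$-factor and the Whittaker character on the $\Sp_{2m}$-factor. In Case (2), the partition $[(2m)(a)^{2b}]$ aligns cleanly with the $P_{ab}$-grading; in Case (1), the partition $[(a)^{2b}(2m)]$ requires a mildly twisted alignment but follows the same scheme. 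I would then apply the extended versions of the root-exchange lemma discussed in the final section of this paper to unfold the Fourier integral, step by step, into an iterated integral consisting of a full unipotent integration over $N_{ab}(F)\bs N_{ab}(\BA)$ composed with Fourier coefficients on the Levi factors.

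Unfolding the Eisenstein series against $N_{ab}(F)\bs N_{ab}(\BA)$ then collapses onto the identity double coset, the other double cosets contributing zero by cuspidality of $\tau$ -- equivalently by the vanishing of Fourier coefficients of $\Delta(\tau,b)$ attached to partitions of $ab$ strictly larger than $[b^a]$. What survives is the product of the Fourier coefficient of $\Delta(\tau,b)$ attached to $[b^a]$ with the Whittaker coefficient of $\sigma$, both nonzero for appropriate data. Since taking the residue at $s=\tfrac{b+1}{2}$ commutes with the finite-dimensional linear functionals in play, one concludes that $\CE_{\Delta(\tau,b)\otimes\sigma}$ has a nonzero Fourier coefficient attached to $\ul{p}$, proving both parts of the theorem. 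The case $a\leq 2m$ with $a$ odd, where the target partition $\eta_{\frak{so}_{2n+1},\frak{sp}_{2n}}(\ul{p}(\psi))=[(2m)(a+1)(a)^{2b-2}(a-1)]$ strictly dominates $[(2m)(a)^{2b}]$, is then recovered from Case (2) via Theorem \ref{special}.

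The main obstacle will be the combinatorial and orbit-theoretic verification during the root-exchange step. One must produce compatible abelian filtrations at each stage, verify the hypotheses of the swap lemmas, and rule out all spurious double cosets in $P_{ab}(F)\bs \Sp_{2n}(F)/V_{\ul{p}}(F)$ when unfolding -- each of these cancellations hinging on cuspidality of $\tau$ and on matching the restricted character to Fourier coefficients of $\Delta(\tau,b)$ attached to partitions bigger than $[b^a]$, which vanish. The two cases require different orbit representatives but the mechanism is uniform, and it is exactly for this kind of bookkeeping that the extended technical lemmas of the final section of the paper are indispensable.
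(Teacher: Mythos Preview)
The paper does not actually prove this theorem; it is quoted from \cite{JL14}, and the surrounding discussion only explains how Theorem~\ref{main1} reduces to Theorem~\ref{main12} via Theorem~\ref{special}. So there is no in-paper proof to compare against beyond the general strategy sketched in \S4.3: work with the explicit residual representation $\CE_{\Delta(\tau,b)\otimes\sigma}$, and express the target Fourier coefficient in terms of the cuspidal construction data using the root-exchange lemmas of \S6. Your proposal follows exactly this strategy and is the correct overall architecture.

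Two points of caution. First, a slip: the maximal partition for the Speh representation $\Delta(\tau,b)$ on $\GL_{ab}$ is $[a^b]$, not $[b^a]$ (this is \cite{JL13a}); the transpose matters when you try to match the Levi-restriction of $\psi_{\ul{p}}$ to the known top coefficient. Second, the step ``unfolding the Eisenstein series against $N_{ab}(F)\backslash N_{ab}(\BA)$ collapses onto the identity double coset by cuspidality of $\tau$'' is too optimistic as stated. The constant term of $E(\phi,s)$ along $P_{ab}$ is a sum over relative Weyl elements, each contributing through an intertwining operator, and none of these vanish by cuspidality alone; one must track which term carries the pole at $s=\tfrac{b+1}{2}$. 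In practice the argument in \cite{JL14} does not proceed by a single constant-term unfolding but rather by an inductive scheme on $b$: one peels off a Fourier--Jacobi coefficient that descends the problem to $\CE_{\Delta(\tau,b-1)\otimes\sigma}$ on a smaller symplectic group, with the root-exchange lemmas of \S6 providing the bridge at each stage. Your endpoint (non-vanishing reduces to the $[a^b]$-coefficient of $\Delta(\tau,b)$ together with the Whittaker coefficient of $\sigma$) is morally right, but reaching it requires this inductive unpacking rather than a one-shot double-coset collapse.
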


{\textbf{Case \Rmnum{2}}:}\
$\psi$ has the following form
\begin{equation}\label{case2psi}
\psi=(\tau, 2b+1) \boxplus (\tau, 1) \boxplus \boxplus_{i=3}^r (\tau_i,1),
\end{equation}
where $b \geq 1$, $\tau \ncong \tau_i$, for any $3 \leq i \leq r$. Assume $\tau$ is an irreducible cuspidal representation of $\GL_a(\BA)$ with central character $\omega_{\tau}$, $\tau_i$ is an irreducible cuspidal representation of $\GL_{a_i}(\BA)$ with central character $\omega_{\tau_i}$, $3 \leq i \leq r$.
Then, by the definition of Arthur parameters, $\tau$ and $\tau_i$'s are all of orthogonal type; $\tau_i \ncong \tau_j$, for $3 \leq i \neq j \leq r$; $2n+1=a(2b+1)+ a + \sum_{i=3}^r {a_i}$, and $\omega_{\tau}^{2b+1} \cdot \omega_{\tau} \cdot \prod_{i=3}^r \omega_{\tau_i} = 1$. Consider the isobaric sum representation
$$\pi=\tau_3 \boxplus \cdots \boxplus \tau_r$$
of $\GL_{2m+1}(\BA)$, where $2m+1=\sum_{i=3}^r a_i = 2n+1-a(2b+2)$. Note that $\pi$ has central character
$\omega_{\pi}=\prod_{i=3}^r \omega_{\tau_i} = 1$.

By the theory of automorphic descent \cite[Theorem 3.1]{GRS11}, there is an irreducible generic cuspidal representation $\sigma$ of $\Sp_{2m}(\BA)$ such that $\sigma$ has the functorial transfer $\pi$. One can see that $L(s, \tau \times \sigma)$ is holomorphic at $s=1$ in this case.
The situation is similar to {\bf Case I}, and we refer to \cite[Section 2]{JL14} for more detailed discussion.

{\textbf{Case \Rmnum{3}}:}
$\psi$ has the following form
\begin{equation}\label{case3psi}
\psi=(\tau, 2b) \boxplus \boxplus_{i=2}^r (\tau_i,1),
\end{equation}
where $b \geq 1$. Note that in this case, $\tau$ is of symplectic type (and hence $a=2k$ is even),
while $\tau_i$ for all $2 \leq i \leq r$ are of orthogonal type.
Assume that $\tau$ is an irreducible cuspidal representation of $\GL_a(\BA)$ with central character $\omega_{\tau}$, and
$\tau_i$ is an irreducible cuspidal representation of $\GL_{a_i}(\BA)$ with central character $\omega_{\tau_i}$, $2 \leq i \leq r$.
Then, by the definition of Arthur parameters, one has that $2n+1=2ab + \sum_{i=2}^r {a_i}$, and $\prod_{i=2}^r \omega_{\tau_i} = 1$. Consider the isobaric sum representation
$$\pi=\tau_2 \boxplus \cdots \boxplus \tau_r$$
of $\GL_{2m+1}(\BA)$, where $2m+1=\sum_{i=2}^r a_i = 2n+1-2ab$. Note that $\pi$ has central character
$\omega_{\pi}=\prod_{i=2}^r \omega_{\tau_i} = 1$.

By the theory of automorphic descent \cite[Theorem 3.1]{GRS11}, there is an irreducible generic cuspidal representation $\sigma$ of $\Sp_{2m}(\BA)$ such that $\sigma$ has the functorial transfer $\pi$. Similarly, we have a residual Eisenstein series
$$
E(,\phi,s)(g)=E(g,\phi_{\Delta(\tau,b) \otimes \sigma},s)
$$
associated to any automorphic form
$$
\phi \in \CA(N_{ab}(\BA)M_{ab}(F) \bs \Sp_{2ab+2m}(\BA))_{\Delta(\tau,b) \otimes \sigma},
$$
following \cite{L76} and \cite{MW95}. By \cite{JLZ13}, this Eisenstein series
may have a simple pole at $\frac{b}{2}$, which is the right-most one.
Denote the representation generated by these residues at $s=\frac{b}{2}$
by $\CE_{\Delta(\tau, b)\otimes \sigma}$.
This residual representation is square-integrable. We note that if $L(\frac{1}{2}, \tau \times \sigma) \neq 0$, then the residual
representation $\CE_{\tau\otimes \sigma}$ is nonzero, and hence by the induction argument in \cite{JLZ13}, the
residual representation $\CE_{\Delta(\tau, b)\otimes \sigma}$ is also nonzero.
Finally, according to Section 6.2 of \cite{JLZ13}, the global Arthur parameter of $\CE_{\Delta(\tau,b)\otimes \sigma}$ is exactly
$$
\psi=(\tau, 2b)\boxplus_{i=2}^r (\tau_i,1)
$$
as in \eqref{case3psi}.

\begin{thm}[\cite{JL14}]\label{main3}
Assume that $a=2k$ and $L(\frac{1}{2}, \tau \times \sigma) \neq 0$.
If the residual representation $\CE_{\tau\otimes \sigma}$ of $\Sp_{4k+2m}(\BA)$, with $\sigma \ncong 1_{\Sp_0(\BA)}$,
has a nonzero Fourier coefficient attached to the partition $[(2k+2m)(2k)]$, then, for any $b \geq 1$, the residual
representation $\CE_{\Delta(\tau, b)\otimes \sigma}$ has a nonzero Fourier coefficient attached to the partition
$[(2k+2m)(2k)^{2b-1}]$.
\end{thm}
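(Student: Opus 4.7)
The plan is to prove Theorem~\ref{main3} by induction on $b \geq 1$, with the standing hypothesis on $\CE_{\tau \otimes \sigma}$ serving as the base case $b=1$. For the inductive step, I aim to bootstrap from the Fourier coefficient of $\CE_{\Delta(\tau, b-1) \otimes \sigma}$ attached to $[(2k+2m)(2k)^{2b-3}]$ inside $\Sp_{2n-4k}$ being nonzero, to the corresponding statement for $\CE_{\Delta(\tau, b) \otimes \sigma}$ inside $\Sp_{2n}$.

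First I would fix a convenient $F$-rational representative $X$ in the stable orbit $\CO^{\st}_{[(2k+2m)(2k)^{2b-1}]}$ so that the associated unipotent subgroup $V_X \subset \Sp_{2n}$ factors as $V_X = U \cdot V'_X$, where $U$ lies inside the unipotent radical $N_{2k}^{2n}$ of the standard maximal parabolic $P_{2k}^{2n}$, and $V'_X$ is the unipotent subgroup attached to $[(2k+2m)(2k)^{2b-3}]$ inside the $\Sp_{2n-4k}$-factor of the Levi $\GL_{2k} \times \Sp_{2n-4k}$. The character $\psi_X$ is arranged to factor compatibly: its restriction to $U$ is of Jacobi/Whittaker type along the $\GL_{2k}$-block, while its restriction to $V'_X$ is the character $\psi_{V'_X}$ attached to the smaller partition.

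Next I would apply the extended root-exchange lemmas stated at the end of this paper to reorganize $\CF^{\psi_X}(\CE_{\Delta(\tau, b) \otimes \sigma})$ into a triple integral: first over (a modification of) $N_{2k}^{2n}$, which realizes the constant term of $\CE_{\Delta(\tau, b) \otimes \sigma}$ along $P_{2k}^{2n}$; then a $\GL_{2k}$-pairing of the first tensor factor against $\tau$ by a Whittaker integral; and finally the Fourier integration of the $\Sp_{2n-4k}$-component against $\psi_{V'_X}$. The constant term of this residual Eisenstein series along $P_{2k}^{2n}$ has been explicitly computed in \cite{JLZ13}; among its summands only the one built from $\tau |\det|^{(b-1)/2} \otimes \CE_{\Delta(\tau, b-1) \otimes \sigma}$ can support the chosen character, while the remaining summands, coming from the other Weyl representatives, produce characters incompatible with $\psi_X$ and drop out. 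The $\GL_{2k}$-side reduces to the standard nonvanishing of the Whittaker period for the generic cuspidal $\tau$, and what survives is a nonzero multiple of $\CF^{\psi_{V'_X}}(\CE_{\Delta(\tau, b-1) \otimes \sigma})$, which is nonzero by the inductive hypothesis.

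The main obstacle I anticipate is the root-exchange bookkeeping in the presence of a residual (rather than cuspidal) inducing datum: one must verify that taking the residue of the Eisenstein series and the Fourier integration over unipotent subgroups can be interchanged without introducing spurious poles or losing normalizing constants, and one must rigorously eliminate the contributions of the other Weyl representatives in the constant term formula along $P_{2k}^{2n}$. A related delicate point is choosing the $F$-rational orbit inside $\CO^{\st}_{[(2k+2m)(2k)^{2b-1}]}$ and the compatible character so that the induced orbital integral on the $\GL_{2k}$-side is a genuine Whittaker pairing rather than a more degenerate variant that would annihilate $\tau$.
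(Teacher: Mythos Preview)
The paper does not prove Theorem~\ref{main3}; it is quoted from \cite{JL14} as a finished result, with only the surrounding discussion (the remark linking the base case to \cite[Conjecture~6.1]{GJR04} and the Barbasch--Vogan computation of $\eta_{\frak{so}_{2n+1},\frak{sp}_{2n}}(\ul{p}(\psi))$) appearing here. So there is no proof in this paper against which to check your proposal line by line.

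That said, your inductive scheme---taking the hypothesis on $\CE_{\tau\otimes\sigma}$ as $b=1$, and for $b>1$ passing through the constant term along $P_{2k}^{2n}$ to land on $\tau\lvert\det\rvert^{?}\otimes\CE_{\Delta(\tau,b-1)\otimes\sigma}$, then pairing off a Whittaker period of the cuspidal $\tau$ and invoking the inductive hypothesis on the $\Sp_{2n-4k}$ factor---is exactly the shape of argument the paper's toolkit (the root-exchange Lemmas~\ref{nvequ}--\ref{nvequ4}, together with the Eisenstein-series input from \cite{JLZ13}) is designed to support, and it mirrors the reduction strategy the paper sketches for the parallel Case~I result (Theorems~\ref{main1} and~\ref{main12}). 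The obstacles you flag (interchange of residue with unipotent integration; elimination of the other Weyl terms in the constant-term formula; correct choice of $F$-rational orbit so that the $\GL_{2k}$ period is genuinely Whittaker) are the right ones.

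Two cautions. First, the assertion that $V_X$ factors cleanly as $U\cdot V'_X$ with $U\subset N_{2k}^{2n}$ is too optimistic as stated: because the leading part $2k+2m$ is strictly larger than $2k$, the $\frak{sl}_2$-grading does not line up with the $P_{2k}^{2n}$ block structure without an intermediate step. In practice one first relates the genuine Fourier coefficient for $[(2k+2m)(2k)^{2b-1}]$ to a \emph{composite} Fourier coefficient (in the sense used in \cite{GRS03} and \cite{JL15}) built from a top $2k$-block and the smaller partition below, and it is this composite coefficient that unfolds through $P_{2k}^{2n}$; the passage between the two is where Lemmas~\ref{nvequ2}/\ref{nvequ4} do real work. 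Second, since in Case~III the representation $\tau$ is of symplectic type (unlike Cases~I--II), the ``Whittaker-type'' period you extract on the $\GL_{2k}$ side will in the actual argument be entangled with a Fourier--Jacobi structure rather than a pure Bessel one; this does not change the overall logic, but it does affect which version of the exchange lemma you invoke and how the non-contributing constant-term pieces are killed.
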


Note that if $\sigma \cong 1_{\Sp_0(\BA)}$, the assumption that
$L(\frac{1}{2}, \tau \times \sigma) \neq 0$ becomes $L(\frac{1}{2}, \tau) \neq 0$. In this case, it is proved in \cite[Theorem 2.1]{L13} that
$\mathfrak{p}^m(\CE_{\Delta(\tau, b)}) = \{[(2k)^{2b}]\}$, which confirms Part (3) of Conjecture \ref{cubmfc} and Conjecture \ref{singleton}
for the global Arthur parameter $\psi=(\tau,b)\boxplus(1,1)$.

We remark that the assumption that the residual representation $\CE_{\tau\otimes \sigma}$ of $\Sp_{4k+2m}(\BA)$,
with $\sigma \ncong 1_{\Sp_0(\BA)}$, has a nonzero Fourier coefficient attached to the partition $[(2k+2m)(2k)]$
is exactly Conjecture 6.1 in \cite{GJR04}, and hence Theorem \ref{main3} has a close connection to the Gan-Gross-Prasad conjecture (\cite{GGP12}),
as remarked in \cite{JL14}.

In this case, one has that $\ul{p}(\psi)=[(2b)^{a}(1)^{2m+1}]$, and
\begin{align*}
\eta_{\frak{so}_{2n+1}, \frak{sp}_{2n}}(\ul{p}(\psi))
= \ & \eta_{\frak{so}_{2n+1}(\BC), \frak{sp}_{2n}(\BC)}([(2b)^{a}(1)^{2m+1}])\\
= \ & [(2b)^a(1)^{2m}]^t\\
= \ & [(a)^{2b}] + [(2m)]\\
= \ & [(a+2m)(a)^{2b-1}],
\end{align*}
where $a=2k$ is even. When $L(\frac{1}{2}, \tau \times \sigma)$ is zero for
the Arthur parameter $\psi$ in \eqref{case3psi}, the corresponding automorphic $L^2$-packet $\wt{\Pi}_\psi(\epsilon_\psi)$
are expected to contain all cuspidal automorphic representations if it is not empty.
This is the case to be discussed in the next section.

\subsection{Part (3) of Conjecture \ref{cubmfc}: cuspidal case}
We briefly discuss the situation when the automorphic $L^2$-packet $\wt{\Pi}_\psi(\epsilon_\psi)$ contains all cuspidal members, if nonempty, for
non-generic global Arthur parameters $\psi\in\wt{\Psi}_2(G)$.

In this case, we may have to use the explicit construction of endoscopy
correspondences outlined in \cite{J14} in order to construct cuspidal members in the automorphic $L^2$-packet $\wt{\Pi}_\psi(\epsilon_\psi)$
for non-generic global Arthur parameters $\psi\in\wt{\Psi}_2(G)$. With such an explicit expression of cuspidal automorphic representations
in terms of their endoscopic data, we should calculate the Fourier coefficients attached to the partition
$\eta_{{\frak{g}^\vee,\frak{g}}}(\underline{p}(\psi))$. The technical key is how to express the global nonvanishing of such Fourier
coefficients by means of nonvanishing of certain data explicitly given by the global Arthur parameter $\psi$. This is one of the
on-going projects of the authors and will be discussed with more details in our future work.

\section{Other Topics Related to Fourier Coefficients}

As remarked before, it is important to figure $\frak{p}^m(\pi)$ for each member $\pi$ belonging to the automorphic
$L^2$-packet $\wt{\Pi}_\psi(\epsilon_\psi)$ for all global Arthur parameters $\psi\in\wt{\Psi}_2(G)$.
In general, the theory of Howe and Li on singular automorphic forms (\cite{H81} and \cite{Li92}) provides a lower bound for
$\frak{p}^m(\pi)$, since all cuspidal automorphic representations are non-singular. We may reformulate this in terms of
the notion of Fourier coefficients attached to nilpotent orbits or partitions. Meanwhile, we also intend to understand the properties of
the nilpotent orbits or the partitions in $\frak{p}^m(\pi)$ for general cuspidal automorphic representations $\pi$ of quasisplit
classical groups $G$.

\subsection{Problems on small cuspidal automorphic representations}
It is a classical problem in the theory of automorphic forms to understand singular automorphic forms. In \cite{H81}
Howe characterizes the singular automorphic forms of $\Sp_{2n}$ in terms of the theory of theta correspondences, based on his notion of
rank of unitary representations. In \cite{Li92}, it is proved that all cuspidal automorphic representations are non-singular
for all classical groups. Hence for cuspidal automorphic representations, it is interesting to understand those with top Fourier coefficients attached to small nilpotent orbits. Such representations are called  {\sl small} cuspidal automorphic representations. 

In the spirit of the explicit constructions of endoscopy correspondences in \cite{J14}, we only consider $F$-split classical groups $G$ here
and leave the more general case for future consideration. We are going to reformulate the result of Li in \cite{Li92} in terms of the
notion of Fourier coefficients of automorphic forms attached to partitions or nilpotent orbits. Following the line of ideas in \cite{H81} and
\cite{Li89}, we will investigate the relation between the structure of the set $\frak{p}^m(\pi)$ and the property of $\pi\in\CA_\cusp(G)$ 
in terms of the endoscopic classification of the discrete spectrum of Arthur.

First of all, we reformulate the full rank Fourier coefficients of automorphic forms in terms of partitions or nilpotent orbits.
We denote by $\udl{p}_\ns$ the partition corresponding to the non-singular Fourier coefficients.

{\bf Case ($G=\Sp_{2n}$):}\
The corresponding partition is $\udl{p}_\ns=[2^n]$. This is a special partition for $\Sp_{2n}$.

{\bf Case ($G=\SO_{2n+1}$):}\
The corresponding partition $\udl{p}_\ns$ is given as follows
$$
\udl{p}_\ns
=
\begin{cases}
[2^{2e}1]&\ \text{if}\ n=2e;\\
[2^{2e}1^3]&\ \text{if}\ n=2e+1.
\end{cases}
$$

{\bf Case ($G=\SO_{2n}$):}\
The corresponding partition $\udl{p}_\ns$ is given as follows
$$
\udl{p}_\ns
=
\begin{cases}
[2^{2e}]&\ \text{if}\ n=2e;\\
[2^{2e}1^2]&\ \text{if}\ n=2e+1.
\end{cases}
$$

Following \cite{JLS14}, which will be briefly discussed below, we know that for any automorphic representation $\pi$, the set
$\frak{p}^m(\pi)$ contains only special partitions. It is easy to check that when $G=\Sp_{2n}$ and $G=\SO_{2n}$, the
non-singular partitions $\udl{p}_\ns$ are special, but when $G=\SO_{2n+1}$, the non-singular partition $\udl{p}_\ns$ is not special.
Hence when $G=\SO_{2n+1}$, the partitions contained in $\frak{p}^m(\pi)$ as $\pi$ runs in $\CA_\cusp(G)$ should be bigger than or
equal to the following partition
$$
\udl{p}_\ns^G=
\begin{cases}
[32^{2e-2}1^2]&\ \text{if}\ n=2e;\\
[32^{2e-2}1^4]&\ \text{if}\ n=2e+1.
\end{cases}
$$
Following \cite{CM93}, $\udl{p}_\ns^G$ denotes the $G$-expansion of the partition $\udl{p}_\ns$.
Of course, when $G=\Sp_{2n}$ and $G=\SO_{2n}$, $\udl{p}_\ns^G=\udl{p}_\ns$.

\begin{prop}
The $G$-expansion of the non-singular partition, $\udl{p}_\ns^G$,
is a lower bound for partitions in the set $\frak{p}^m(\pi)$ as $\pi$ runs in $\CA_\cusp(G)$.
\end{prop}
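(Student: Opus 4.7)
The plan is to deduce the proposition from the theorem of Li \cite{Li92} on non-singularity of cuspidal automorphic representations, combined with an ascent from non-special partitions to their $G$-expansion, in the spirit of Theorem \ref{special}. First, I would translate Li's theorem into the partition language of Section 3: the statement that every $\pi\in\CA_{\cusp}(G)$ is non-singular in Howe's sense \cite{H81} is equivalent to saying that $\pi$ has a nonzero Fourier coefficient $\CF^{\psi_{\udl{p}_\ns,\CO}}(\varphi_\pi)$ for some $\varphi_\pi$ in the space of $\pi$ and some $F$-rational orbit $\CO\subset\CO^{\st}_{\udl{p}_\ns}$, where $\udl{p}_\ns$ is the partition listed case-by-case in the preceding paragraphs. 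This requires identifying the $\Fs\Fl_2$-triple whose associated unipotent subgroup $V_X$ recovers the maximal-rank unipotent subgroups used by Howe and Li; in each of the three types, this is straightforward because the relevant unipotent radical is abelian and its characters are classified by quadratic or hermitian forms of the appropriate type.

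Next I would treat the three cases separately using the explicit shape of $\udl{p}_\ns$. For $G=\Sp_{2n}$ and $G=\SO_{2n}$, the partition $\udl{p}_\ns$ is already $G$-special, so $\udl{p}_\ns^G=\udl{p}_\ns$, and Li's theorem directly provides a nonzero Fourier coefficient attached to $\udl{p}_\ns^G$. For $G=\SO_{2n+1}$, $\udl{p}_\ns$ is not $\SO_{2n+1}$-special, and here one needs the orthogonal analog of Theorem \ref{special}: if $\pi$ has a nonzero Fourier coefficient attached to a non-special partition $\ul{p}$, then $\pi$ also has a nonzero Fourier coefficient attached to the $G$-expansion $\ul{p}^G$. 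Applying this with $\ul{p}=\udl{p}_\ns$ yields a nonzero Fourier coefficient of $\pi$ attached to $\udl{p}_\ns^G$.

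Having established, in all three cases, that every $\pi\in\CA_{\cusp}(G)$ admits a nonzero Fourier coefficient attached to $\udl{p}_\ns^G$, the lower bound claim follows from the maximality clause in the definition of $\frak{p}^m(\pi)$: if some $\ul{p}\in\frak{p}^m(\pi)$ satisfied $\ul{p}<\udl{p}_\ns^G$, then condition (2) in the definition would be violated by $\ul{p}'=\udl{p}_\ns^G$. Using the fact from \cite{JLS14} that every partition in $\frak{p}^m(\pi)$ is $G$-special, together with the defining property of the $G$-expansion as the smallest special partition dominating $\udl{p}_\ns$, one then obtains $\udl{p}_\ns^G\leq\ul{p}$ for every $\ul{p}\in\frak{p}^m(\pi)$.

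The main obstacle is establishing the orthogonal analog of Theorem \ref{special} for $G=\SO_{2n+1}$. The symplectic version in \cite{JL15} is proved by root-exchange arguments in the style of \cite{GRS11} applied to the unipotent subgroup associated to $\ul{p}$ and a carefully chosen intermediate unipotent subgroup adapted to the $\Sp_{2n}$-collapse/expansion procedure from \cite{CM93}. The same strategy is expected to work in the odd orthogonal case, but one must carefully track the parity constraints (odd parts of odd multiplicity) that distinguish $\SO_{2n+1}$-partitions and handle the additional anisotropic direction in the odd orthogonal form; this is precisely the kind of extension the technical lemmas at the end of the paper are designed to facilitate.
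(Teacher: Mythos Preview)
Your approach is broadly aligned with the paper's, but the paper's route is more direct. The paper does not give a formal proof; the argument is contained in the discussion immediately preceding the proposition. It simply combines Li's theorem (every cuspidal $\pi$ has a nonzero Fourier coefficient attached to $\udl{p}_\ns$) with the result of \cite{JLS14}, recorded as Theorem \ref{special3} just below, that every partition in $\frak{p}^m(\pi)$ is $G$-special. Since $\udl{p}_\ns^G$ is by definition the smallest special partition dominating $\udl{p}_\ns$, these two inputs already force the bound. There is no need to first manufacture a nonzero Fourier coefficient at $\udl{p}_\ns^G$ itself.

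In particular, the ``main obstacle'' you isolate --- the orthogonal analog of Theorem \ref{special} for $\SO_{2n+1}$ --- is not an obstacle: it is exactly Theorem \ref{special2}(2), stated immediately after the proposition and proved in \cite{JLS14}. But even that full raising statement is more than what is required; the paper uses only its corollary that maximal partitions are special. Your detour through an explicit raising from $\udl{p}_\ns$ to $\udl{p}_\ns^G$ in the $\SO_{2n+1}$ case is therefore unnecessary work, and it buys you nothing that the direct argument does not already give.

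There is also a genuine logical gap in your final deduction. Producing a nonzero Fourier coefficient at $\udl{p}_\ns^G$ only rules out $\ul{p} < \udl{p}_\ns^G$ for $\ul{p} \in \frak{p}^m(\pi)$; it does not rule out $\ul{p}$ being incomparable to $\udl{p}_\ns^G$ in the dominance order. Your attempted fix --- ``$\ul{p}$ is special and $\udl{p}_\ns^G$ is the smallest special partition $\geq \udl{p}_\ns$, hence $\udl{p}_\ns^G \leq \ul{p}$'' --- is a non sequitur: that implication requires first knowing $\ul{p} \geq \udl{p}_\ns$, which is precisely the point at issue and is not supplied by your intermediate raising step. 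The needed input is that every $\ul{p}\in\frak{p}^m(\pi)$ already dominates $\udl{p}_\ns$ (this is the content of Li's non-singularity for cuspidal $\pi$, suitably interpreted), after which the passage to $\udl{p}_\ns^G$ is immediate from specialness.
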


According to known examples, one expects more precise structure for the set $\frak{p}^m(\pi)$ as $\pi$ runs in $\CA_\cusp(G)$. For instance,
\cite{GRS03} suggested that the set $\frak{p}^m(\pi)$ contains partitions with only even parts for $\Sp_{2n}$ or its metaplectic double cover;
and it is conjectured in \cite{G06} that
for split $\SO_{2n}$, the set $\frak{p}^m(\pi)$ contains partitions with odd parts with certain constraints. We expect that the same happens
to other classical groups.
On one hand, it is important to determine the set $\frak{p}^m(\pi)$ as $\pi$ runs in $\CA_\cusp(G)$. On the other hand, it is interesting to
construct explicitly cuspidal automorphic representations $\pi$ such that $\udl{p}_\ns^G\in\frak{p}^m(\pi)$. We will get back to this topic
in our future work.

\subsection{Maximal Fourier coefficients: stabilizer}
In \cite{GRS03}, for any irreducible cuspidal automorphic representation $\pi$ of $\Sp_{2n}(\BA)$ or $\wt{\Sp}_{2n}(\BA)$,
Ginzburg, Rallis and Soudry constructed an even partition $\ul{p}(\pi)$, i.e., consisting of only even parts,
which satisfies a property of ``maximal at every stage" (for the details of this property, see the proof of \cite[Theorem 2.7]{GRS03} or
\cite[Remark 5.1]{JL15}).
Write $\ul{p}(\pi) = [(2n_1)^{s_1} (2n_2)^{s_2} \cdots (2n_k)^{s_k} ]$ with $n_1 > n_2 > \cdots > n_k$ and $s_i \geq 1$.
Then, the Levi part of the stabilizer of any character $\psi_{\ul{p}, \CO}$ is isomorphic to $O_1 \times O_2 \times \cdots \times O_k$,
where $O_i$ is the orthogonal group with respect to the quadratic form $Q_i$, and the nilpotent orbit $\CO$ is parametrized
by the pair $(\ul{p}, \{Q_i\}_{i=1}^k)$ (\cite{W01}).

In \cite{JL15}, we show that all $O_i$'s are $F$-anisotropic. This confirms a conjecture of Ginzburg (\cite[Conjecture 4.3]{G06}).
As an easy corollary, if $F$ is a totally imaginary number field, then the equality $s_i \leq 4$ holds for any $i=1,\ldots, k$.
The local analogues of this result have been proved by the work of M{\oe}glin and Waldspureger (\cite{MW87}) and
of M{\oe}glin (\cite{M96}) for $p$-adic local fields, and by the work of B. Harris (\cite{H11}) for archimedean local fields.

If we assume that set $\frak{p}^m(\pi)$ contains only one partition, the above provides an understanding of the $F$-rational structure of
the nilpotent orbits with which the corresponding partition belonging to $\frak{p}^m(\pi)$. It is an ongoing project to extend this to
other classical groups.

\subsection{Maximal Fourier coefficients: special orbit}
In \cite{JLS14}, we extend the result in Theorem \ref{special} to all local ($p$-adic fields) and global representations of groups $\mathrm{G}=\Sp_{2n}$ and its double cover, $\SO_{2n}$, $\SO_{2n+1}$.

Note that in the $p$-adic field case, given a nilpotent orbit $\CO$ corresponding to a partition $\ul{p}$,
one can also define Jacquet modules with respect to the group $V_X$ and the character $\psi_X$ for $X \in \CO$.
As before,
we say $\ul{p}$ is a $\mathrm{G}$-partition if it is a symplectic partition of $2n$ when $\mathrm{G}=\Sp_{2n}$ and its double cover,
and it is an orthogonal partition of $N$ when $\mathrm{G}=\SO_N$. For the double cover of symplectic groups,
a symplectic partition of $2n$ is called {\sl metaplectic-special} if the number of even parts bigger than every odd part appearing
in the corresponding partition is odd (metaplectic-special partitions are first defined in \cite{M96} and are called anti-special there).
We shall say that $\ul{p}$ is {\sl special}, or {\sl $G$-special}, if it is symplectic, metaplectic
or orthogonal special, respectively. Given any symplectic partition $\ul{p}$ of $2n$, its metaplectic special
expansion $\ul{p}^{\wt{\Sp}_{2n}}$ is defined to be the smallest metaplectic special partition of $2n$ which is bigger or equal than $\ul{p}$.

\begin{thm}[\cite{JLS14}]\label{special2}
With notation as above, the following hold:
\begin{enumerate}
\item Let $k$ be a $p$-adic local field. Let $\pi$ be a smooth representation of $\mathrm{G}(k)$,
which is genuine when $\mathrm{G}(k)=\wt{\Sp}_{2n}(k)$. If $\pi$ has a nonzero Jacquet module attached to a
non-special $\mathrm{G}$-partition $\ul{p}$, then $\pi$ must have a nonzero
Jacquet module attached to $\ul{p}^{\mathrm{G}}$, the $\mathrm{G}$-expansion of the partition $\ul{p}$.
\item Let $F$ be a number field.
Let $\pi$ be an automorphic representation of
$\mathrm{G}(\BA)$, which is genuine when $\mathrm{G}(\BA_F)=\wt{\Sp}_{2n}(\BA_F)$.
If $\pi$ has a nonzero Fourier coefficient attached to a
non-special $\mathrm{G}$-partition $\ul{p}$, then $\pi$ must have a nonzero
Fourier coefficient attached to $\ul{p}^{\mathrm{G}}$, the $\mathrm{G}$-expansion of the partition $\ul{p}$.
\end{enumerate}
\end{thm}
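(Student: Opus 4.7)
The plan is to reduce the theorem to an inductive sequence of \emph{elementary raisings}, and prove each elementary step by a root-exchange argument that works uniformly in the local $p$-adic and global settings. More precisely, I would first establish a combinatorial lemma: if $\ul{p}$ is a non-special $\mathrm{G}$-partition (in the appropriate sense — symplectic, orthogonal, or metaplectic-special), then there exists an intermediate $\mathrm{G}$-partition $\ul{p}'$ with $\ul{p} < \ul{p}' \leq \ul{p}^{\mathrm{G}}$ obtained from $\ul{p}$ by a single, explicit box-shifting operation dictated by the $\mathrm{G}$-collapse/expansion recipe of Lemma 6.3.9 of \cite{CM93} (adapted to metaplectic-special partitions via the characterization of \cite{M96}). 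Iterating this operation yields $\ul{p}^{\mathrm{G}}$ in finitely many steps, so it suffices to prove that nonvanishing of the Fourier coefficient (or Jacquet module) attached to $\ul{p}$ forces nonvanishing of the one attached to $\ul{p}'$.

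For the elementary step, I would realize the non-special defect of $\ul{p}$ as a specific pair of rows in the Young diagram whose structure provides an $\mathfrak{sl}_2$-subtriple embedded in a larger one corresponding to $\ul{p}'$. Concretely, inside the unipotent subgroup $V_X$ attached to $X \in \CO \subset \CO^{\st}_{\ul{p}}$ one isolates an abelian subgroup $U$ whose character determined by $\psi_X$ can be Fourier-expanded along a complementary abelian subgroup $U'$. The characters occurring in this expansion fall into $V_X$-orbits parameterized by nilpotent elements $X'$ of larger partition; the key claim is that all nontrivial orbits contribute zero for representation-theoretic reasons (because the Jacquet/integration over unipotent radicals containing a $\GL$-type Heisenberg can be shown to annihilate $\pi$ in the non-special direction), while the orbit corresponding to $X'$ of type $\ul{p}'$ gives exactly the Fourier coefficient attached to $\ul{p}'$. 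This is the root-exchange principle promised in the final section of the introduction, in the versions extended in the present paper.

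The local and global cases would be handled in parallel: the root-exchange argument is formally identical, since it is purely a manipulation of (twisted) Jacquet functors at the level of algebraic groups, and the extended technical lemmas announced at the end of the excerpt guarantee that swapping integration variables in $V_X$ is compatible with both the smooth Jacquet module in the $p$-adic case and the integral Fourier coefficient in the automorphic case. The metaplectic case requires slightly more care: the genuineness hypothesis ensures that the relevant theta-like integrals over abelian subgroups behave correctly under the metaplectic cover, and the notion of metaplectic-special (with its parity condition on even parts above odd parts, as in \cite{M96}) is exactly the invariant preserved by the root-exchange step in that setting.

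The main obstacle I expect is the verification that the root-exchange step is genuinely \emph{lossless}, i.e.\ that no non-special partition is mapped to something strictly smaller than $\ul{p}'$ or to a partition violating the $\mathrm{G}$-type constraint. This combinatorial bookkeeping is most delicate for $\wt{\Sp}_{2n}$, where metaplectic-specialness is not the same as symplectic-specialness, and for $\SO_{2n+1}$, where the extra part of size $1$ in the odd orthogonal setting can interact with the raising operation in subtle ways. Once the recipe is pinned down cleanly in all four cases $(\Sp_{2n}, \wt{\Sp}_{2n}, \SO_{2n}, \SO_{2n+1})$, the rest of the argument reduces to the well-understood root-exchange manipulations already used in \cite{GRS11} and \cite{JL15}, applied in the form extended in the final section of this paper.
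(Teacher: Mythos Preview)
Your overall architecture---reduce to an elementary raising step and iterate, with the root-exchange lemmas as the engine---is the same as the paper's. The gap is in the mechanism you propose for the elementary step.

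You assert that after Fourier-expanding along a complementary abelian subgroup, ``all nontrivial orbits contribute zero for representation-theoretic reasons,'' leaving only the one for $\ul{p}'$. This is not how such raising arguments run, and it would not work as stated: one does not show that competing terms vanish (they generally do not), one shows that a \emph{particular} term is forced to be nonzero. The paper's mechanism, sketched in the paragraph following Theorem~\ref{special3}, is to regard the Fourier coefficient (resp.\ Jacquet module) attached to $\ul{p}$ as an automorphic function (resp.\ smooth representation) on the Levi part of the stabilizer of the character $\psi_{\ul{p},\CO}$, which is a product of orthogonal and symplectic factor groups. The combinatorial meaning of ``$\ul{p}$ is non-special'' is precisely that this product contains a nontrivial symplectic factor, and the analytic input is that on this symplectic factor the restricted function (resp.\ representation) automatically has a nonzero Fourier coefficient (resp.\ twisted Jacquet module) attached to the minimal nontrivial partition of that factor. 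Composing that minimal coefficient with the original $\ul{p}$-coefficient and then applying the root-exchange lemmas (in the versions of Lemmas~\ref{nvequ}--\ref{nvequ4}) produces a nonzero coefficient of $\pi$ for a strictly larger partition; iteration reaches $\ul{p}^{\mathrm{G}}$.

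So the idea you are missing is the passage to the reductive stabilizer and the identification of the symplectic factor that non-specialness guarantees. Your anticipated combinatorial bookkeeping for the four cases $(\Sp_{2n},\wt{\Sp}_{2n},\SO_{2n},\SO_{2n+1})$ is real, but it enters only to determine which factor of the stabilizer is symplectic and what the raised partition is---not as a ``losslessness'' check on a vanishing argument that does not in fact take place.
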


As an easy corollary, we obtain the following result.

\begin{thm}\label{special3}
Let $F$ be a number field.
Let $\pi$ be an irreducible automorphic representation of
$\mathrm{G}(\BA)$, which is genuine when $\mathrm{G}(\BA_F)=\wt{\Sp}_{2n}(\BA_F)$.
Then any partition in the set $\frak{p}^m(\pi)$ is special. The same holds accordingly for $p$-adic local fields.
\end{thm}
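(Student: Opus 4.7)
The plan is to derive Theorem \ref{special3} directly as a corollary of Theorem \ref{special2}, by arguing by contradiction: if some $\ul{p}\in\frak{p}^m(\pi)$ were non-special, then the reduction to the $\mathrm{G}$-expansion would produce a strictly larger partition also supporting a nonzero Fourier coefficient, contradicting the maximality built into the definition of $\frak{p}^m(\pi)$.

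More concretely, first I would fix $\pi$ as in the statement and pick any $\ul{p}\in\frak{p}^m(\pi)$. By definition of $\frak{p}^m(\pi)$, the representation $\pi$ has a nonzero Fourier coefficient $\CF^{\psi_{\ul{p},\CO}}(\varphi_\pi)$ for some $F$-rational orbit $\CO\subset\CO^\st_{\ul{p}}$, while $\pi$ has no nonzero Fourier coefficient attached to any partition $\ul{p}'>\ul{p}$. Suppose for contradiction that $\ul{p}$ is not $\mathrm{G}$-special. Then by Theorem \ref{special2}(2) (or part (1) in the local $p$-adic setting), $\pi$ must have a nonzero Fourier coefficient attached to the $\mathrm{G}$-expansion $\ul{p}^{\mathrm{G}}$.

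The next step is to observe that $\ul{p}^{\mathrm{G}}>\ul{p}$ strictly whenever $\ul{p}$ is not $\mathrm{G}$-special. Indeed, $\ul{p}^{\mathrm{G}}$ is by definition the smallest $\mathrm{G}$-special partition which is $\geq\ul{p}$; since $\ul{p}$ itself is not $\mathrm{G}$-special, we have $\ul{p}^{\mathrm{G}}\neq\ul{p}$ and hence $\ul{p}^{\mathrm{G}}>\ul{p}$ in the natural ordering. This produces a partition strictly bigger than $\ul{p}$ to which $\pi$ has a nonzero Fourier coefficient, contradicting condition (2) in the definition of $\frak{p}^m(\pi)$. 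Hence every partition in $\frak{p}^m(\pi)$ is $\mathrm{G}$-special. The $p$-adic local statement follows by exactly the same argument, with Jacquet modules replacing Fourier coefficients and using part (1) of Theorem \ref{special2}.

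There is essentially no obstacle here beyond invoking Theorem \ref{special2} cleanly: the heavy lifting, namely the implication that a nonzero Fourier coefficient on a non-special $\ul{p}$ forces one on $\ul{p}^{\mathrm{G}}$, has already been carried out in \cite{JLS14}. The only small points to verify are (a) that the $\mathrm{G}$-expansion is defined and strictly increases non-special partitions for all three families $\Sp_{2n}$ (and its metaplectic cover), $\SO_{2n}$, $\SO_{2n+1}$, which is immediate from the definitions recalled before Theorem \ref{special2}, and (b) that the genuineness hypothesis is preserved under the reduction, which is built into the formulation of Theorem \ref{special2}. So the proof is a one-line deduction modulo the earlier work.
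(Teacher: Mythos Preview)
Your proposal is correct and matches the paper's approach exactly: the paper presents Theorem \ref{special3} simply as ``an easy corollary'' of Theorem \ref{special2} without writing out a proof, and your contradiction argument---a non-special $\ul{p}\in\frak{p}^m(\pi)$ would force a nonzero Fourier coefficient on $\ul{p}^{\mathrm{G}}>\ul{p}$, violating maximality---is precisely the intended one-line deduction.
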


The local part of Theorem \ref{special3} has been proved by M{\oe}glin (\cite{M96}) for classical groups, using the result of M{\oe}glin and Waldspurger (\cite{MW87}) on local character expansion.
The proof of the global part of Theorem \ref{special3} has been sketched by Ginzburg, Rallis and Soudry
(\cite{GRS03}) for symplectic groups, and briefly discussed by Ginzburg (\cite{G06}) for all split classical groups.

In \cite{JLS14}, we actually prove more. For the $p$-adic local field case, take $\pi$ to be a smooth representation of $\mathrm{G}(k)$,
which is genuine when $\mathrm{G}(k)=\wt{\Sp}_{2n}(k)$. Assume that $\pi$ has a nonzero Jacquet module attached to a
$\mathrm{G}$-partition $\ul{p}$, which can be viewed as a representation of the Levi part of the stabilizer of the character $\psi_{\ul{p}, \CO}$ (a product of certain symplectic and orthogonal groups).
If this Jacquet module has a nonzero Jacquet module attached to a minimal non-trivial partition
(in either a symplectic factor group or an orthogonal factor group),
then $\pi$ has a nonzero Jacquet module attached to a partition
that is bigger than $\ul{p}$ (for detailed description of this partition, see \cite{JLS14}).
If $\ul{p}$ is not special (as in Theorem \ref{special2}(1)), then one can see that this Jacquet module indeed has a nonzero Jacquet module attached to the minimal partition of a symplectic factor group. Hence, $\pi$ must have a nonzero Jacquet module attached to a partition (the $\rm G$-expansion of $\ul{p}$) that is bigger than $\ul{p}$.

For the global case, take $\pi$ to be an automorphic representation of $\mathrm{G}(\BA)$,
which is genuine when $\mathrm{G}(\BA)=\wt{\Sp}_{2n}(\BA)$. Assume that $\pi$ has a nonzero Fourier coefficient attached to a
$\mathrm{G}$-partition $\ul{p}$, which can be viewed as a function on the Levi part of the stabilizer of the character $\psi_{\ul{p}, \CO}$ (a product of certain symplectic and orthogonal groups).
If this Fourier coefficient has a nonzero Fourier coefficient attached to a minimal non-trivial partition
(in either a symplectic factor group or an orthogonal factor group),
then $\pi$ has a nonzero Fourier coefficient attached to a partition
that is bigger than $\ul{p}$ (for detailed description of this partition, see \cite{JLS14}).
If $\ul{p}$ is not special (as in Theorem \ref{special2}(2)), then one can see that this Fourier coefficient indeed has a nonzero Fourier coefficient attached to the minimal partition of a symplectic factor group. Hence, $\pi$ must have a nonzero Fourier coefficient attached to a partition (the $\rm G$-expansion of $\ul{p}$) that is bigger than $\ul{p}$. This is also the main idea of the proof of Theorem \ref{special}.
These results actually give an algorithm of raising nilpotent orbits which provide non-zero local Jacquet modules or global Fourier
coefficients for a representation (local or global automorphic). Their potential applications will be considered in our future work.

\section{Roots exchange and Fourier coefficients}

In the study of Fourier coefficients of automorphic forms, in particular concerning the global nonvanishing property, a technical lemma has
been very useful in the theory. We try to extend it to a few versions, which will be more convenient for future applications.

Let $H$ be any $F$-quasisplit classical group, including the general linear group.
First, we recall Lemma 5.2 of \cite{JL13a}, which is also formulated in a slight variation as in Corollary 7.1 of \cite{GRS11}.
Note that the proof of Lemma 5.2 of \cite{JL13a} is valid for $H(\BA)$.

Let $C$ be an $F$-subgroup of a maximal unipotent subgroup of $H$, and let $\psi_C$ be a non-trivial character of $[C] = C(F) \bs C(\BA)$.
$X, Y$ are two unipotent $F$-subgroups, satisfying the following conditions:
\begin{itemize}
\item[(1)] $X$ and $Y$ normalize $C$;
\item[(2)] $X \cap C$ and $Y \cap C$ are normal in $X$ and $Y$, respectively, $(X \cap C) \bs X$ and $(Y \cap C) \bs Y$ are abelian;
\item[(3)] $X(\BA)$ and $Y(\BA)$ preserve $\psi_C$;
\item[(4)] $\psi_C$ is trivial on $(X \cap C)(\BA)$ and $(Y \cap C)(\BA)$;
\item[(5)] $[X, Y] \subset C$;
\item[(6)]  there is a non-degenerate pairing $(X \cap C)(\BA) \times (Y \cap C)(\BA) \rightarrow \BC^*$, given by $(x,y) \mapsto \psi_C([x,y])$, which is
multiplicative in each coordinate, and identifies $(Y \cap C)(F) \bs Y(F)$ with the dual of
$
X(F)(X \cap C)(\BA) \bs X(\BA),
$
and
$(X \cap C)(F) \bs X(F)$ with the dual of
$
Y(F)(Y \cap C)(\BA) \bs Y(\BA).
$
\end{itemize}

Let $B =CY$ and $D=CX$, and extend $\psi_C$ trivially to characters of $[B]=B(F)\bs B(\BA)$ and $[D]=D(F)\bs D(\BA)$,
which will be denoted by $\psi_B$ and $\psi_D$ respectively.

\begin{lem}[Lemma 5.2 of \cite{JL13a}]\label{nvequ}
Assume that $(C, \psi_C, X, Y)$ satisfies all the above conditions. Let $f$ be an automorphic form on $H(\BA)$. Then
$$\int_{[C]} f(cg) \psi_C^{-1}(c) dc \equiv 0, \forall g \in H(\BA),$$
if and only if
$$\int_{[D]} f(ug) \psi_D^{-1}(u) du \equiv 0, \forall g \in H(\BA),$$
if and only if
$$\int_{[B]} f(vg) \psi_B^{-1}(v) dv \equiv 0, \forall g \in H(\BA).$$
\end{lem}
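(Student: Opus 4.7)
The plan is to reduce everything to the central integral
$$
I_C(f)(g) := \int_{[C]} f(cg)\psi_C^{-1}(c)\,dc,
$$
and prove the chain of equivalences $I_D \equiv 0 \iff I_C \equiv 0 \iff I_B \equiv 0$, where I write $I_D(f)(g)$ and $I_B(f)(g)$ for the analogous integrals over $[D]$ and $[B]$. The easy direction $I_C\equiv 0 \Rightarrow I_D\equiv 0$ is essentially built into the setup: conditions (1)--(4) ensure that $\psi_D$ is a well-defined character of $D=CX$, trivial on $X$ and restricting to $\psi_C$ on $C$, and that $D/C\cong X/(X\cap C)$ is abelian by (2). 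Factoring the integral over $[D]$ along the fibration with base $Q_X := X(F)(X\cap C)(\BA)\bs X(\BA)$ and fiber $[C]$ then yields
$$
I_D(f)(g) = \int_{Q_X} I_C(f)(xg)\,dx,
$$
which visibly vanishes once $I_C$ does.

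The substantive direction is $I_D\equiv 0 \Rightarrow I_C\equiv 0$, which I would prove by Fourier expansion. For fixed $g$, set $\phi_g(x) := I_C(f)(xg)$. Using the automorphy of $f$, the normalization property in (1), the $X$-invariance of $\psi_C$ in (3), and the triviality of $\psi_C$ on $(X\cap C)(\BA)$ in (4), one checks that $\phi_g$ descends to a function on the compact abelian group $Q_X$. The duality in (6) identifies the Pontryagin dual of $Q_X$ with $(Y\cap C)(F)\bs Y(F)$ through the pairing $x\mapsto \psi_C([x,y])$, giving the Fourier expansion
$$
\phi_g(x) \;=\; \sum_{y\in (Y\cap C)(F)\bs Y(F)} a_g(y)\,\psi_C([x,y]),
$$
with $a_g(y) = \int_{Q_X}\phi_g(x)\,\psi_C^{-1}([x,y])\,dx$. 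The crucial step is to establish the identity
$$
a_g(y) \;=\; I_D(f)(y^{-1}g).
$$
One obtains it by invoking (5) to write $x = [x,y]\cdot (yxy^{-1})$, so that $\psi_C^{-1}([x,y])$ can be absorbed into the $c$-variable via the substitution $c\mapsto c\cdot [x,y]$, then conjugating by $y$ and using the left $Y(F)$-invariance of $f$, the change of variables $c\mapsto y^{-1}cy$ on $[C]$, and the $Y$-invariance of $\psi_C$ from (3). If $I_D \equiv 0$, then each $a_g(y)$ vanishes, hence $\phi_g\equiv 0$ on $Q_X$; in particular $I_C(f)(g) = \phi_g(e) = 0$ for all $g$.

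The main obstacle will be executing this roots-exchange calculation cleanly: all six hypotheses must be invoked in precisely the right places --- (1) and (2) to make the quotients $Q_X$, $D/C$, and $B/C$ well-defined and abelian; (3) and (4) to define $\psi_D$ and $\psi_B$ as characters and to descend $\phi_g$ to $Q_X$; (5) to relocate the commutator $[x,y]$ inside $C$ where $\psi_C$ is available; and (6) to identify the Pontryagin dual of $Q_X$ with the index set of the Fourier expansion. Once the identity $a_g(y) = I_D(f)(y^{-1}g)$ is in hand, Fourier inversion on the compact abelian group $Q_X$ finishes the argument, and the symmetric argument with $X$ and $Y$ interchanged --- which is legitimate because all six conditions are symmetric in the two groups --- yields $I_B \equiv 0 \Rightarrow I_C \equiv 0$, completing the cycle.
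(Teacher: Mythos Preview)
Your proposal is correct and follows the standard roots-exchange argument via Fourier expansion on the compact abelian quotient $Q_X$, which is precisely the approach of the cited sources \cite[Lemma~5.2]{JL13a} and \cite[Corollary~7.1]{GRS11}. The present paper does not reprove this lemma but only recalls it, so there is nothing further to compare against; your handling of the key identity $a_g(y)=I_D(f)(y^{-1}g)$ via the commutator relation $x=[x,y]\cdot yxy^{-1}$, automorphy under $y\in Y(F)$, and the $Y$-invariance of $\psi_C$ is exactly how the argument runs.
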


It is well known that the Fourier transformation on a locally compact abelian group can be reformulated in terms of Heisenberg groups and
the associated Weil representations as explained in \cite{We64}. Lemma \ref{nvequ} can be viewed as an extension of this basic fact to
such a general set-up, where a Heisenberg group plays an essential role. The result of Lemma \ref{nvequ} is also an extension to such
a generality of the basic fact about Fourier-Jacobi coefficients for automorphic forms as formulated in \cite{GRS03}. As explained
in \cite{GRS11}, this lemma played essential roles when one tries to understand the non-vanishing of certain Fourier coefficients in the
theory of automorphic descents. When we try to understand Fourier coefficients related to Conjecture \ref{cubmfc}, we face more general setting
than what can simply be covered by Lemma \ref{nvequ}, and hence we try to extend Lemma \ref{nvequ} to a few different versions, which
will be useful in the theory of Fourier coefficients. More discussions from representation-theoretic point of view can be found in
\cite{JLS14}.

For simplicity, we always use $\psi_C$ to denote its extensions $\psi_B$ and $\psi_D$ when we use Lemma \ref{nvequ}.
The following lemma is an extension of Lemma \ref{nvequ} via induction, proof of which will be omitted.

\begin{lem}\label{nvequ2}
Assume the quadruple $(C, \psi_C, X, Y)$ satisfies the following conditions:
$X = \{X_i\}_{i=0}^r$, $Y = \{Y_i\}_{i=1}^{r+1}$, $X_0 = Y_{r+1}= \{1_{H(\BA)}\}$, for $1 \leq i \leq r$, each quadruple
$$(X_{i-1} \cdots X_1 C Y_r \cdots Y_{i+1}, \psi_C, X_i, Y_i)$$
satisfies all the conditions for Lemma \ref{nvequ}.
Let $f$ be an automorphic form on $H(\BA)$.
Then
$$\int_{[X_{r} \cdots X_1 C]} f(xcg) \psi_C^{-1}(c) dcdx \equiv 0, \forall g \in H(\BA),$$
if and only if
$$\int_{[CY_r \cdots Y_{1}]} f(cyg) \psi_C^{-1}(c) dydc \equiv 0, \forall g \in H(\BA).$$
\end{lem}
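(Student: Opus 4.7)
The plan is a straightforward induction on $r$, using Lemma \ref{nvequ} as the atomic exchange step. The base case $r = 0$ is trivial, since in that case the outer products of $X_i$'s and $Y_j$'s are empty and both sides of the claimed equivalence reduce to the single integral $\int_{[C]} f(cg) \psi_C^{-1}(c) \, dc$.

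For the inductive step, I would introduce, for each $0 \leq i \leq r$, the interpolating unipotent subgroup
$$U_i := X_i X_{i-1} \cdots X_1\, C\, Y_r Y_{r-1} \cdots Y_{i+1},$$
extend $\psi_C$ trivially to a character on $[U_i]$ (still denoted $\psi_C$), and set
$$\Phi_i(g) := \int_{[U_i]} f(ug)\, \psi_C^{-1}(u)\, du.$$
By construction, $\Phi_r(g)$ is the left-hand integral in the statement and $\Phi_0(g)$ is the right-hand integral, so it is enough to establish that $\Phi_i(g) \equiv 0$ for all $g$ if and only if $\Phi_{i-1}(g) \equiv 0$ for all $g$, for each $1 \leq i \leq r$.

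Fixing such an $i$, I would put $C' := X_{i-1} \cdots X_1 C Y_r \cdots Y_{i+1}$ and let $\psi_{C'}$ denote the trivial extension of $\psi_C$ to $[C']$. The hypothesis of the lemma is precisely that the quadruple $(C', \psi_{C'}, X_i, Y_i)$ satisfies all six conditions required by Lemma \ref{nvequ}. Applying Lemma \ref{nvequ} directly to this quadruple with $D = X_i C' = U_i$ and $B = C' Y_i = U_{i-1}$, and noting that the characters $\psi_D$ and $\psi_B$ produced by that lemma are by construction the trivial extensions of $\psi_{C'}$, which coincide with our extensions of $\psi_C$ to $[U_i]$ and $[U_{i-1}]$, one obtains immediately the equivalence $\Phi_i \equiv 0 \Leftrightarrow \Phi_{i-1} \equiv 0$. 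Chaining these $r$ equivalences from $i = r$ down to $i = 1$ finishes the argument.

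The proof is essentially bookkeeping: the entire analytic content has been absorbed into Lemma \ref{nvequ}, and no new estimate or identity is needed. The only place that warrants care is checking that the order in which one peels off $X_i$'s from the left and glues on $Y_j$'s to the right is consistent with the indexing $(X_{i-1} \cdots X_1 C Y_r \cdots Y_{i+1}, \psi_C, X_i, Y_i)$ assumed in the statement; once this indexing is set up, the characters, the normalizer conditions, and the Heisenberg-style pairing all transport cleanly from one step to the next. There is no genuine obstacle beyond this organizational check.
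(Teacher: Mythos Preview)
Your proposal is correct and matches the paper's intended approach: the paper omits the proof of Lemma~\ref{nvequ2}, stating only that it is ``an extension of Lemma~\ref{nvequ} via induction,'' but the fully written-out proof of the parallel Lemma~\ref{nvequ4} follows exactly the chain-of-equivalences strategy you describe, applying the atomic lemma successively to the interpolating quadruples $(X_{i-1}\cdots X_1 C Y_r\cdots Y_{i+1},\psi_C,X_i,Y_i)$ for $i=1,\ldots,r$. Your interpolating groups $U_i$ and functionals $\Phi_i$ are precisely the bookkeeping the paper uses there.
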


The following lemma follows from Corollary 7.2 of \cite{GRS11},
and is proved in the proof of Theorem 2.1 of \cite{GJS12} (see
formula (2.26) and its proof therein), the argument also appears in the proof of \cite[Lemma 1, p. 895]{GRS99}.

\begin{lem}[\cite{GJS12} (2.26), \cite{GRS99} Lemma 1, p. 895]\label{nvequ3}
Assume the quadruple $(C, \psi_C, X, Y)$ is as in Lemma \ref{nvequ}.
Let $\pi$ be an irreducible automorphic representation of $H(\BA)$, and let $f \in \pi$. Then
there is an $f' \in \pi$, such that
$$\int_{[B]} f(vg) \psi_B^{-1}(v) dv
= \int_{[D]} f'(ug) \psi_D^{-1}(u) du, \forall g \in H(\BA).$$
\end{lem}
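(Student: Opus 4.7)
Plan. I would adapt the proof of Lemma \ref{nvequ} and track the specific form $f'$ rather than merely checking vanishing. By the $H(\BA)$-equivariance of both sides in the variable $g$ (and since $\pi(g)f \in \pi$ for all $g$), it suffices to prove the identity at $g = 1$; write $I_B(f)$ and $I_D(f')$ for the two sides. Introduce the auxiliary function
$$\phi_f(h) := \int_{[C]} f(ch)\psi_C^{-1}(c)\,dc.$$
Conditions (1), (3), (4), combined with the left $H(F)$-invariance of $f$, imply that $y \mapsto \phi_f(yh)$ descends to a well-defined function on the compact abelian quotient $Y(F)(Y \cap C)(\BA) \bs Y(\BA)$. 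Unfolding $[B] = CY(F) \bs CY(\BA)$ and using that $\psi_B$ extends $\psi_C$ trivially on $Y$ then gives
$$I_B(f) = \int_{Y(F)(Y \cap C)(\BA) \bs Y(\BA)} \phi_f(y)\,dy.$$

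The crux is a Heisenberg-type Fourier swap. Condition (6) identifies the Pontryagin dual of the compact quotient $Y(F)(Y \cap C)(\BA) \bs Y(\BA)$ with the discrete set $(X \cap C)(F) \bs X(F)$ via $\xi \mapsto (y \mapsto \psi_C([\xi,y]))$. I would Fourier-expand $\phi_f(y)$ along this dual and then invoke the left $X(F)$-invariance of $f$ together with the commutator relation $[X,Y] \subset C$ to ``unfold'' the discrete sum indexed by $(X \cap C)(F) \bs X(F)$ into a continuous integral over the adelic quotient $X(F)(X \cap C)(\BA) \bs X(\BA)$. This is precisely the bookkeeping already used in the proof of Lemma \ref{nvequ}, but performed with the integrand tracked explicitly rather than collapsed to a vanishing criterion. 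The outcome is $I_B(f) = I_D(f')$ for $f'$ obtained from $f$ by an explicit right-translation operator, heuristically a convolution against the Fourier--Jacobi kernel supported on the Heisenberg envelope of $(C, X, Y, \psi_C)$.

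The step I expect to be the main obstacle is to verify that the $f'$ produced by this manipulation is a genuine element of the space of $\pi$, and not just of some ambient space of automorphic functions. Here the irreducibility of $\pi$ is essential: the operator producing $f'$ from $f$ can be written as an integral of right translations $\pi(h)f$ against a smooth compactly supported function on a unipotent subgroup of $H(\BA)$, and the image of any such integral operator automatically lies in the $\pi$-isotypic subspace. Justifying the interchange of the Fourier sum and the adelic integration at the intermediate step is a secondary technical point, routine once one confirms compactness of the relevant adelic quotient and discreteness of its Pontryagin dual via condition (6); the same formal calculation then recovers the $g$-dependent identity for general $g \in H(\BA)$ by applying the $g=1$ case to $\pi(g)f$.
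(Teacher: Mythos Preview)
Your plan correctly isolates the first step --- the Heisenberg--Fourier manipulation underlying Lemma~\ref{nvequ} --- but there is a genuine gap at the second step, where you pass from the unfolded expression to an element $f'\in\pi$. After the unfolding (this is exactly Lemma~7.1 of \cite{GRS11}), one does \emph{not} land on an integral over the compact quotient $X(F)(X\cap C)(\BA)\bs X(\BA)$ as you write; rather one obtains
\[
\int_{[B]} f(v)\psi_B^{-1}(v)\,dv \;=\; \int_{(Y\cap C)(\BA)\bs Y(\BA)} \int_{[D]} f(uy)\psi_D^{-1}(u)\,du\,dy,
\]
where the outer integral is over the \emph{non-compact} adelic vector space $(Y\cap C)(\BA)\bs Y(\BA)\cong\BA^k$. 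Thus the na\"ive candidate $f'=\int_{(Y\cap C)(\BA)\bs Y(\BA)}\pi(y)f\,dy$ is an integral of right translates against the constant function~$1$ on a non-compact space, and there is no reason for it to converge as a vector in $\pi$. Your assertion that the operator producing $f'$ is ``an integral of right translations against a smooth compactly supported function'' is therefore not justified by the manipulation you describe.

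What the paper actually invokes to close this gap is Corollary~7.2 of \cite{GRS11}: the inner integral $y\mapsto\int_{[D]}f(uy)\psi_D^{-1}(u)\,du$ is shown to admit a finite decomposition
\[
\int_{[D]} f(uy)\psi_D^{-1}(u)\,du \;=\; \sum_{i=1}^{r}\phi_i(y)\int_{[D]} f_i(uy)\psi_D^{-1}(u)\,du
\]
with $f_i\in\pi$ and $\phi_i\in\CS((Y\cap C)(\BA)\bs Y(\BA))$ genuine Schwartz functions (not compactly supported). Only after inserting this and interchanging the order of integration does one obtain $f'=\sum_i\phi_i*f_i\in\pi$. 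This Schwartz-space/Dixmier--Malliavin type input is the substantive missing ingredient in your proposal; without it, neither the convergence of the outer $y$-integral nor the membership $f'\in\pi$ follows from the Fourier bookkeeping alone.
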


\begin{proof}
We sketch the proof as follows.
Without loss of generality (via replacing $f$ by its right $g$ translation), we only have to prove
$$\int_{[B]} f(v) \psi_B^{-1}(v) dv
= \int_{[D]} f'(u) \psi_D^{-1}(u) du.$$
By Lemma 7.1 of \cite{GRS11},
$$\int_{[B]} f(v) \psi_B^{-1}(v) dv
=\int_{(Y\cap C)(\BA)\bs Y(\BA)} \int_{[D]} f(uy) \psi_D^{-1}(u) dudy.$$
By Corollary 7.2 of \cite{GRS11},
there exist $f_1, \ldots, f_r \in \pi$,
and Schwartz functions $\phi_1, \ldots, \phi_r \in \CS((Y\cap C)(\BA)\bs Y(\BA))$, such that for all $y \in (Y\cap C)(\BA)\bs Y(\BA)$,
$$\int_{[D]} f(uy) \psi_D^{-1}(u) du=
\sum_{i=1}^r \phi_i(y)\int_{[D]} f_i(uy) \psi_D^{-1}(u) du.$$
Therefore,
\begin{align*}
\int_{[B]} f(v) \psi_B^{-1}(v) dv
= & \int_{(Y\cap C)(\BA)\bs Y(\BA)} \int_{[D]} f(uy) \psi_D^{-1}(u) dudy\\
= & \int_{(Y\cap C)(\BA)\bs Y(\BA)}\sum_{i=1}^r \phi_i(y)\int_{[D]} f_i(uy) \psi_D^{-1}(u) du dy\\
= & \int_{[D]} \sum_{i=1}^r \int_{(Y\cap C)(\BA)\bs Y(\BA)} \phi_i(y)f_i(uy)dy \psi_D^{-1} du.
\end{align*}
If we take
$$
f'(u)=\sum_{i=1}^r \phi_i * f_i(u)
=\sum_{i=1}^r \int_{(Y\cap C)(\BA)\bs Y(\BA)} \phi_i(y)f_i(uy)dy,
$$
then $f'=\sum_{i=1}^r \phi_i * f_i \in \pi$ and we obtain
$$
\int_{[B]} f(v) \psi_B^{-1}(v) dv
= \int_{[D]} f'(u) \psi_D^{-1} du.
$$
\end{proof}

For simplicity, we always use $\psi_C$ to denote its extensions $\psi_B$ and $\psi_D$ when we use Lemma \ref{nvequ3}.
Following Lemma \ref{nvequ3}, we obtain the following result which is analogous to Lemma \ref{nvequ2}.

\begin{lem}\label{nvequ4}
Assume the quadruple $(C, \psi_C, X, Y)$ satisfies the following conditions:
$X = \{X_i\}_{i=0}^r$, $Y = \{Y_i\}_{i=1}^{r+1}$, $X_0 = Y_{r+1}= \{1_{H(\BA)}\}$, for $1 \leq i \leq r$, each quadruple
$$(X_{i-1} \cdots X_1 C Y_r \cdots Y_{i+1}, \psi_C, X_i, Y_i)$$
satisfies all the conditions for Lemma \ref{nvequ}.
Let $\pi$ be an irreducible automorphic representation of $H(\BA)$, and let $f \in \pi$.
Then there is an $f' \in \pi$, such that
$$\int_{[CY_r \cdots Y_{1}]} f(cyg) \psi_C^{-1}(c) dydc =\int_{[X_{r} \cdots X_1 C]} f'(xcg) \psi_C^{-1}(c) dcdx, \forall g \in H(\BA).$$
\end{lem}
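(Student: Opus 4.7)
The plan is to prove the statement by induction on $r$, applying Lemma \ref{nvequ3} one step at a time to transfer each $Y_i$ to the corresponding $X_i$, with the automorphic form being replaced at each step by a new vector in the same representation $\pi$.

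For the base case $r=1$, the quadruple $(C, \psi_C, X_1, Y_1)$ satisfies the hypotheses of Lemma \ref{nvequ}, so Lemma \ref{nvequ3} applied to $B=CY_1$ and $D=X_1C$ yields $f' \in \pi$ with the claimed equality. For the inductive step, set
$$C_1 := C Y_r Y_{r-1} \cdots Y_2,$$
which by assumption, together with $(X_1, Y_1)$ and $\psi_{C_1} := \psi_C$ (extended trivially), forms a quadruple satisfying the hypotheses of Lemma \ref{nvequ}. Applying Lemma \ref{nvequ3} to $(C_1, \psi_{C_1}, X_1, Y_1)$ produces some $f_1 \in \pi$ satisfying
$$\int_{[C_1 Y_1]} f(vg) \psi_C^{-1}(v)\, dv = \int_{[X_1 C_1]} f_1(ug) \psi_C^{-1}(u)\, du$$
for all $g \in H(\BA)$. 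The left-hand side equals the integral over $[C Y_r \cdots Y_1]$ appearing in the lemma, while the right-hand side is an integral over $[X_1 C Y_r \cdots Y_2]$.

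Now the quadruple $(X_1C, \psi_C, \{X_i\}_{i=2}^r, \{Y_i\}_{i=2}^r)$ of length $r-1$ again satisfies the hypotheses of the lemma: for each $i \geq 2$, the group $X_{i-1} \cdots X_2 (X_1 C) Y_r \cdots Y_{i+1}$ coincides with $X_{i-1} \cdots X_1 C Y_r \cdots Y_{i+1}$, and we are given that this, paired with $(X_i,Y_i)$, satisfies the conditions of Lemma \ref{nvequ}. By the inductive hypothesis applied to $f_1 \in \pi$, there exists $f' \in \pi$ such that
$$\int_{[X_1 C Y_r \cdots Y_2]} f_1(ug) \psi_C^{-1}(u)\, du = \int_{[X_r \cdots X_1 C]} f'(xcg) \psi_C^{-1}(c)\, dc\, dx$$
for all $g \in H(\BA)$. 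Combining the two displayed equalities yields the desired identity.

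The argument is essentially routine once Lemma \ref{nvequ3} is in place; the only point requiring care is the bookkeeping of the compound groups $X_{i-1} \cdots X_1 C Y_r \cdots Y_{i+1}$ under the iteration, namely checking that after moving $Y_1$ across to form $X_1C$, the remaining sequences of quadruples still satisfy the hypotheses. This is immediate from the given assumptions, since the $i$-th hypothesis of the current lemma is exactly the $(i-1)$-st hypothesis for the reduced data. No representation-theoretic obstacle appears beyond what was already handled in Lemma \ref{nvequ3}; in particular, the replacement $f \mapsto f_1 \mapsto \cdots \mapsto f'$ stays inside $\pi$ at every stage because each step only convolves the current vector with a Schwartz function on the appropriate abelian quotient, as in the proof of Lemma \ref{nvequ3}.
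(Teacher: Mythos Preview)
Your proof is correct and follows essentially the same approach as the paper: both arguments iterate Lemma \ref{nvequ3} through the sequence of quadruples $(X_{i-1}\cdots X_1 C Y_r\cdots Y_{i+1},\psi_C,X_i,Y_i)$, replacing the automorphic form at each step by a new vector in $\pi$. The only cosmetic difference is that you package the iteration as an induction on $r$, whereas the paper unrolls it explicitly step by step.
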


\begin{proof}
The proof of this lemma is very similar to that of Lemma \ref{nvequ2}. We apply Lemma \ref{nvequ3} in this proof whenever we
apply Lemma \ref{nvequ} in the proof of Lemma \ref{nvequ2}. To be complete, we include full detail here.

Since $X = \{X_i\}_{i=1}^r$, $Y = \{Y_i\}_{i=1}^r$, and for $1 \leq i \leq r$ each quadruple
$$(X_{i-1} \cdots X_1 C Y_r \cdots Y_{i+1}, \psi_C, X_i, Y_i)$$
satisfies all the conditions for Lemma \ref{nvequ}, we first apply Lemma \ref{nvequ3} to the following
quadruple
$$(C Y_r \cdots Y_{2}, \psi_C, X_1, Y_1),$$
and obtain that there exists $f^{(1)} \in \pi$, such that
\begin{align}\label{sec2equ6}
\begin{split}
& \int_{[CY_r \cdots Y_{1}]} f(cyg) \psi_C^{-1}(c) dydc \\
= \ & \int_{[X_1 C Y_r \cdots Y_{2}]} f^{(1)}(xcyg) \psi_C^{-1}(u) dydcdx, \forall g \in H(\BA).
\end{split}
\end{align}
Next we apply Lemma \ref{nvequ3} to the following
quadruple
$$(X_1 C Y_r \cdots Y_{3}, \psi_C, X_2, Y_2),$$
and obtain that there exists $f^{(2)} \in \pi$, such that
\begin{align}\label{sec2equ7}
\begin{split}
& \int_{[X_1 C Y_r \cdots Y_{2}]} f^{(1)}(cyg) \psi_C^{-1}(c) dydc \\
= \ & \int_{[X_2 X_1 C Y_r \cdots Y_{3}]} f^{(2)}(xcyg) \psi_C^{-1}(u) dydcdx, \forall g \in H(\BA).
\end{split}
\end{align}
The same argument is apply to the following sequence of quadruples, with help of Lemma \ref{nvequ3},
\begin{align}\label{sec2equ8}
\begin{split}
& (X_{2} \cdots X_1 C Y_r \cdots Y_{4}, \psi_C, X_3, Y_3),\\
& (X_{3} \cdots X_1 C Y_r \cdots Y_{5}, \psi_C, X_4, Y_4),\\
& \cdots \\
& (X_{r-1} \cdots X_1 C, \psi_C, X_r, Y_r).
\end{split}
\end{align}
For example, after applying Lemma \ref{nvequ3} to the quadruple
$$(X_{i-1} \cdots X_1 C Y_r \cdots Y_{i+1}, \psi_C, X_i, Y_i),$$
we get that there exists $f^{(i)} \in \pi$, such that
\begin{align}\label{sec2equ9}
\begin{split}
& \int_{[X_{i-1} \cdots X_1 C Y_r \cdots Y_{i}]} f^{(i-1)}(cyg) \psi_C^{-1}(c) dydc \\
= \ & \int_{[X_{i} \cdots X_1 C Y_r \cdots Y_{i+1}]} f^{(i)}(xcyg) \psi_C^{-1}(u) dydcdx, \forall g \in H(\BA).
\end{split}
\end{align}
After applying Lemma \ref{nvequ3} to the last quadruple
$$(X_{r-1} \cdots X_1 C, \psi_C, X_r, Y_r),$$
we get that there exists $f^{(r)} \in \pi$, such that
\begin{align}\label{sec2equ10}
\begin{split}
& \int_{[X_{r-1} \cdots X_1 C Y_{r}]} f^{(r-1)}(cyg) \psi_C^{-1}(c) dydc \\
= \ & \int_{[X_{r} \cdots X_1 C]} f^{(r)}(xcyg) \psi_C^{-1}(u) dydcdx, \forall g \in H(\BA).
\end{split}
\end{align}
Let $f'=f^{(r)} \in \pi$. From \eqref{sec2equ6} - \eqref{sec2equ10}, we can conclude that
$$\int_{[CY_r \cdots Y_{1}]} f(cyg) \psi_C^{-1}(c) dydc =\int_{[X_{r} \cdots X_1 C]} f'(xcg) \psi_C^{-1}(c) dcdx, \forall g \in H(\BA).$$
This completes the proof of the lemma.
\end{proof}

\end{document}